\newtheorem{theorem}{Theorem}[section]
\newtheorem{lemma}[theorem]{Lemma}
\newtheorem{proposition}[theorem]{Proposition}
\newtheorem{definition}[theorem]{Definition}
\newenvironment{proof}[1][Proof]{{\par \emph{#1.}}\;}{\hfill $\Box$\par}
\numberwithin{equation}{section}
\newcommand{\norm}[1]{\|#1\|}
\newcommand{\abs}[1]{\left|#1\right|}
\newcommand{\set}[1]{\left\{#1\right\}}
\newcommand{\Real}{\mathbb R}
\newcommand{\NN}{\mathbb N}
\newcommand{\Z}{\mathbb Z}
\newcommand{\eps}{\varepsilon}
\newcommand{\lam}{\Lambda}
\newcommand{\Sz}{\mathscr{S}}
\newcommand{\F}{\mathscr{F}}
\newcommand{\C}{\mathcal{C}}
\newcommand{\ls}{\leqslant}
\newcommand{\gs}{\geqslant}
\newcommand{\supp}{\text{{\,\rm supp}\,}}
\newcommand{\dive}{\text{\,\rm div}}
\newcommand{\divp}{\text{\,\rm div}^\perp}
\newcommand{\vu}{\mathbf{u}}
\newcommand{\inc}{d}
\newcommand{\vv}{\mathbf{v}}
\newcommand{\hb}{\bar{\h}_0}
\newcommand{\dk}[1][k]{\triangle_{#1}}
\newcommand{\te}[1][]{\tilde{#1}}
\newcommand{\be}[1][]{\dot{B}_{2,1}^{#1}}
\newcommand{\hbe}[2][s-1]{\tilde{B}_{2,1}^{#1,#2}}
\newcommand{\h}{h} 
\newcommand{\f}{\mathrm{f}} 
\newcommand{\g}{\mathrm{g}} 
\newcommand{\propref}[1]{\hyperref[#1]{Proposition~\ref{#1}}}
\newcommand{\thmref}[1]{\hyperref[#1]{Theorem~\ref{#1}}}
\newcommand{\lemref}[1]{\hyperref[#1]{Lemma~\ref{#1}}}
\newcommand{\secref}[1]{\hyperref[#1]{Section~\ref{#1}}}
\begin{document}

\title{\bf Cauchy problem for viscous rotating shallow water equations}

\pagestyle{myheadings}
\markboth{C.C.Hao, L.Hsiao, H.-L.Li}{Viscous Rotating Shallow Water Equations}
\author{Chengchun Hao${}^{1,}$, Ling Hsiao${}^1$, Hai-Liang Li${}^2$\\
\parbox{\textwidth}{\begin{center}
\footnotesize \it ${}^1$Institute of Mathematics, Academy of Mathematics \& Systems Science, CAS,\\
 Beijing 100190, P. R. China\\
${}^2$Department of Mathematics,
 Capital Normal University,
 Beijing 100037, P.R.China
\end{center}}
}

\date{}

\maketitle

\begin{abstract}
We consider the Cacuhy problem for a viscous compressible rotating
shallow water system with a third-order surface-tension term
involved, derived recently in the modelling of motions for shallow
water with free surface in a rotating sub-domain~\cite{Mar07}. The
global existence of the solution in the space of Besov type is shown for
initial data close to a constant equilibrium state away from the vacuum.
Unlike the previous analysis about the compressible fluid model
without coriolis forces, see for instance \cite{Dan00,Has08}, the
rotating effect causes a coupling between two parts of Hodge's
decomposition of the velocity vector field, and additional regularity is
required in order to carry out the Friedrichs' regularization and
compactness arguments.
\end{abstract}


\section{Introduction}
The nonlinear shallow water equation is used to model the motion of
a shallow layer of homogeneous incompressible fluid in a three
dimensional rotating sub-domain and, in particular, to simulate the
vertical average dynamics of the fluid in terms of the horizontal
velocity and depth variation. In general, it is modeled by the three
dimensional incompressible Navier-Stokes-Coriolis system in a
rotating sub-domain of $\Real^3 $ together with a (nonlinear)
free moving surface boundary condition for which the stress tension is evolved
at the air-fluid interface from above and the Navier boundary
condition of wall-law type holds at the bottom.  Under a large-scale
assumption and hydrostatic approximation, the nonlinear shallow
water equation has been derived recently in~\cite{GerbeauPerthame01,Mar07}.
Usually, the nonlinear shallow water equations take the  following
form of compressible Navier-Stokes equations
\begin{equation}\label{0sw}
\left\{\begin{aligned}
    &\h_t+\dive(\h \vu)=0,\\
    &(\h \vu)_t+\dive (\h \vu\otimes \vu)+\g
    \h\nabla\h+\f(\h\vu)^\perp\\&\quad\quad\quad
    =\dive(2\xi(\h) D(\vu))
      +\nabla(\lambda(\h)\dive\vu),\\ 
    &\h(0)=\h_0, \quad \vu(0)=\vu_0,
    \end{aligned}\right.
\end{equation}
where $\h(t,x)$ is the height of the fluid surface,
$\vu(t,x)=(u^1(t,x),u^2(t,x))^\top$ is the horizontal velocity
field, $\vu^\perp(t,x)=(-u^2(t,x),u^1(t,x))$,
$x=(x_1,x_2)\in\Real^2$,
$D(\vu)=\frac{1}{2}(\nabla\vu+(\nabla\vu)^\top)$, $\g>0$ is the
gravitational acceleration, $\f>0$ is the Coriolis frequency,
$\xi\gs 0$ and $\lambda$ are the dynamical viscosities satisfying
$\lambda+\xi\gs 0$.

For the shallow water system \eqref{0sw}, there is a mount of work
to deal with the global well-posedness of strong solutions subject
to some small initial perturbation of a constant state or the global
existence of weak solutions for large initial data.
When the viscosities satisfy $\xi(\h)=\h$ and $\lambda=0$, and the
effect of the Coriolis force and/or  third-order surface tension
term is omitted ($f=0$, $\beta=0$), the local existence and uniqueness
of classical solutions to the Cauchy-Dirichlet problem for the
shallow water equations  with initial data in $C^{2+\alpha}$ was studied in
\cite{Bui81} using Lagrangian coordinates and H\"older
space estimates. Kloeden and Sundbye \cite{klo85,Sun96} proved the global existence
and uniqueness of classical solutions to the Cauchy-Dirichlet
problem using Sobolev space estimates by following the energy method
of Matsumura-Nishida \cite{MatN80}. Sundbye \cite{Sun98} proved also
the existence and uniqueness of classical solutions to the Cauchy
problem using the method of \cite{MatN80}. Wang-Xu, in
\cite{WanX05}, obtained local solutions for any initial data and
global solutions for small initial data $\h_0-\hb,\, \vu_0\in
H^{2+s}(\Real^2)$ with $s>0$. The result was improved by
Chen-Miao-Zhang and Haspot to get global existence in time for small
initial data $\h_0-\hb\in\be[0]\cap\be[1]$ and $\vu_0\in\be[0]$ as a
special case in \cite{CM08,Has08}.
Cheng-Tadmor discussed the long time existence of approximate
periodic solutions for the rapidly rotating shallow water for
initial data $(\h_0,\vu_0)\in H^m(\mathbb{T}^2)$ with $m>5$ where
the viscous terms are absent (i.e. $\xi=\lambda=0$
) in \cite{ChTa07}.
The global existence of weak solutions for arbitrarily large initial
data is established in one dimension~\cite{LiLiXin2008}, where the
vanishing of vacuum states in finite time is shown, and in
multi-dimensional bounded domain with spherical
symmetry~\cite{GuoJiuXin2007} with the help of the Bresch-Desjardins
entropy~\cite{BreschDesjardins03b} and the $L^1$-stability
compactness argument~\cite{MelletVasseur05}. The global existence of
weak solutions for arbitrarily large initial data is shown by
Bresch-Desjardins~\cite{BreschDesjardins03} where additional drag
friction and capillary terms are involved to construct an global
approximate solutions. An related systems with a third-order term
stemming from the capillary tensor also have been considered by
Danchin-Desjardins for a compressible fluid model of Korteweg type
\cite{DanD01} with constant viscosity coefficients, and the global
existence of strong solution is shown.
\par

In the present paper, we consider the global existence of the Cauchy
problem for the 2D viscous shallow water equations
\begin{equation}\label{sw}
\left\{\begin{aligned}
    &\h_t+\dive(\h \vu)=0,\\
    &(\h \vu)_t+\dive (\h \vu\otimes \vu)+\g
    \h\nabla\h+\f(\h\vu)^\perp\\
    &\quad\quad\quad=2\mu\dive(\h D(\vu))
      +2\mu\nabla(\h\dive\vu)+\beta\h\nabla\Delta\h,\\
    &\h(0)=\h_0, \quad \vu(0)=\vu_0,
    \end{aligned}\right.
\end{equation}
which corresponds to \eqref{0sw} for the case
$2\xi(\h)=\lambda(\h)=2\mu\h$ with $\mu>0$ a constant,  and is derived
recently in \cite{Mar07} with a third-order surface tension term
involved by considering second order approximation and parabolic
correction where $\beta>0$ is the capillary coefficient. Although there are many mathematical
results about the shallow water equations \eqref{0sw}, there is no
analysis about Eq.~\eqref{sw}. It also should be mentioned that
the global existence of weak solutions does not apply here since the
Bresch-Desjardins entropy~\cite{BreschDesjardins03b} is not
satisfied for Eq.~\eqref{sw}. In addition, the classical theory does not
cover the case with coriolis force and capillarity term involved.

We investigate the global existence of strong solution in some Besov
space. Although we also make use of the
Hodge's decomposition to separate the velocity field into a
compressible part and an incompressible part, unlike \cite{Dan00,Dan01} we finally obtain a
coupled system due to the rotating effect of the Coriolis force. In
fact, it can not be decoupled into a system involving only the
compressible part and a heat equation containing only the
incompressible part because of the appearance of the Coriolis
frequency, which leads to a strong coupling between the gradient
vector field part and divergence free part of the fluid velocity in
terms of the Hodge's  decomposition. Thus, we have to investigate
the whole system of the height, the compressible velocity field part
and the incompressible velocity field part. With the help of the
Littlewood-Paley analysis and hybrid Besov spaces, we obtain the a
priori estimates in Chemin-Lerner type time-spatial spaces which are
necessary in order to use the interpolation theory of time-spatial
spaces involving hybrid Besov spaces. Then we use a classical
Friedrichs' regularization method to construct approximate solutions
and prove the existence of a solution by compactness arguments. For
the uniqueness of solutions, due to the contribution of the
third-order surface tension term, we can prove it in a larger space
than that for the existence and we do not need more regularity on the spaces.

For the convenience of the statement of main results, we note that $\hbe[s_1]{s_2}$ is a
hybrid Besov space defined in the next section,  the space $E^s$ is
defined by
\begin{align*}
    E^s=&\Big\{(\h,\vu)\in
    \te[\C]\left([0,\infty);\hbe{s}\right)\cap
    L^1\left(0,\infty;\hbe[s+3]{s+2}\right)\\
    &\qquad\qquad\times\left(\te[\C]\left([0,\infty);\be[s-1]\right)
    \cap L^1\left(0,\infty;\be[s+1]\right)
    \right)^2\Big\},
\end{align*}
and $\te[\C]([0,\infty);\hbe[s_1]{s_2})$ is the subset of functions of the Chemin-Lerner type space $\te[L]_T^\infty(\hbe[s_1]{s_2})$ defined in the next section which are continuous on $[0,\infty)$ with values in $\hbe[s_1]{s_2}$.

 For the initial data $\h_0$, we suppose that it is a small
perturbation of some positive constant $\hb$. The main theorem of
this paper reads as follows.

\begin{theorem}\label{thm.main}
Let $\eps\in(0,1)$,  $\h_0-\hb\in \hbe[0]{1+\eps}$ and
$\vu_0\in\hbe[0]{\eps}$. Then, there exist two positive constants
$\alpha$ small enough and $M$ such that if
    \begin{align*}
        \norm{\h_0-\hb}_{\hbe[0]{1+\eps}}+\norm{\vu_0}_{\hbe[0]{\eps}}
        \ls\alpha,
    \end{align*}
then \eqref{sw} yields a unique global solution $(\h,\vu)$ in
$(\hb,\mathbf{0})+(E^1\cap E^{1+\eps})$ which satisfies:
\begin{align*}
    \norm{(\h-\hb,\vu)}_{E^1\cap E^{1+\eps}}
    \ls
      M(\norm{\h_0-\hb}_{\hbe[0]{1+\eps}}
    +\norm{\vu_0}_{\hbe[0]{\eps}}),
\end{align*}
where $M$ is independent of the initial data.
\end{theorem}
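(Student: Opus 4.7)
\medskip
\noindent\textbf{Proof plan.} The strategy is to write \eqref{sw} as a perturbation of the constant state $(\hb,\mathbf{0})$, diagonalize the linear part by Hodge's decomposition, establish a priori bounds for the frequency-localized linearized system in hybrid Besov spaces, treat the nonlinearities by Bony's paraproduct decomposition, and finally produce the solution by a Friedrichs regularization followed by a compactness argument.

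Set $\eta:=\h-\hb$ and divide the momentum equation by $\h$, expanding $1/\h$ around $\hb$ (legitimate as long as $\|\eta\|_{L^\infty}$ stays small). Introducing the scalar potentials $\vv:=\Lambda^{-1}\dive\vu$ and $w:=\Lambda^{-1}\curl\vu$ and using the identities $\dive\vu^\perp=-\curl\vu$, $\curl\vu^\perp=\dive\vu$, the linearized system becomes
\begin{align*}
\eta_t+\hb\Lambda\vv &= F_1,\\
\vv_t-4\mu\Delta\vv-(\g\Lambda+\beta\Lambda^3)\eta &= \f\, w+F_2,\\
w_t-\mu\Delta w &= -\f\,\vv+F_3,
\end{align*}
where the source terms $F_i$ are quadratic in $(\eta,\vu,\nabla\vu)$ multiplied by smooth functions of $\eta$. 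When $\f=0$ the block $(\eta,\vv)$ is of Korteweg type and decouples from the heat equation for $w$, as exploited in \cite{Dan00,Has08}; the novelty here is the zeroth-order Coriolis coupling $\f w,\,-\f\vv$ that ties the gradient and rotational parts of $\vu$ at every frequency.

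I would then apply the dyadic blocks $\dk$ to each equation. For the $(\eta,\vv)$ subsystem, a Danchin-type Lyapunov functional built from $\|\dk\eta\|_{L^2}^2+\|\dk\vv\|_{L^2}^2$ plus a carefully chosen cross-term (to cancel the skew-symmetric dispersive coupling) should yield dissipation of order $|\xi|^2|\vv|^2+|\xi|^4|\eta|^2$ at high frequencies and $|\xi|^2(|\eta|^2+|\vv|^2)$ at low frequencies; the asymmetric high-frequency gain on $\eta$ is precisely what the hybrid norm $\hbe[0]{1+\eps}$ records. The $w$-block gives the standard heat-type estimate in $\be$-type spaces. The Coriolis terms $\pm\f w$ and $\mp\f\vv$ are zeroth order and uncontrolled at each individual frequency, but can be absorbed after summation of the dyadic pieces at the cost of the small regularity margin $\eps>0$; this is what forces the hypothesis to carry an extra $\eps$. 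Everything is carried out in the Chemin-Lerner spaces $\te[L]^p_T(\hbe[s_1]{s_2})$ so that the mixed time-regularity interpolation needed later is available. The nonlinearities $F_i$ are then estimated by Bony's paraproduct decomposition together with the product and composition laws in hybrid Besov spaces; smallness of $\alpha$ closes the bound in $E^1\cap E^{1+\eps}$.

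Finally, construct approximations by Friedrichs' spectral truncation to $\{2^{-n}\ls|\xi|\ls 2^n\}$, solve the resulting ODE in $L^2$, extend them globally using the uniform bounds above, and pass to the limit via Aubin-Lions plus weak-$*$ compactness in the Chemin-Lerner scale. Uniqueness is proved in a slightly larger space: the derivative gained from the capillary term $\beta\nabla\Delta\eta$ lets the difference estimate for two solutions close at one regularity index below the existence class, so no additional smoothness on the data is required. The main obstacle throughout is the Coriolis cross-coupling at low frequencies, where the dissipative structure of the $3\times 3$ system is weakest; compensating the uncontrolled zeroth-order coupling there is what dictates the hybrid regularity indices and forces the excess $\eps$ in the hypothesis on $\vu_0$.
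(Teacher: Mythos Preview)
Your overall architecture---Hodge decomposition, block-by-block Lyapunov functionals with cross-terms, paraproduct estimates, Friedrichs regularization, compactness---is exactly the paper's. But two mechanisms are misidentified, and if you tried to execute the plan as stated the argument would stall.

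The Coriolis coupling is \emph{not} uncontrolled at each block. In the basic energy (pairing the $v$-equation with $\dk v$ and the $w$-equation with $\dk w$) the contributions $-\f(\dk w,\dk v)+\f(\dk v,\dk w)$ cancel by skew-symmetry. Rotation resurfaces only through the cross-term $(\lam\dk\eta,\dk v)$ at high frequencies, or $(\lam^3\dk\eta,\dk v)$ at low frequencies, that one inserts into the Lyapunov functional to generate damping on $\eta$; differentiating these against the $v$-equation leaves a term of the form $\f K(\dk w,\lam\dk\eta)$, which the paper absorbs \emph{at the block level} via Young's inequality and Bernstein, the only price being a smallness constraint on $K$ involving $\f$. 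No summation trick and no $\eps$ margin enter the a priori estimate; Proposition~\ref{prop.conven} holds for every $s\in(0,2]$. The extra $\eps$ is required instead in the \emph{compactness} step: one needs the approximate velocities bounded in $\be[\eps]$ with $\eps>0$ so that the localized spatial embedding is compact, and one needs their time derivatives in $L^p_t$ for some $p>1$ so that Ascoli gives equicontinuity. The obstruction to the latter is the \emph{capillary} term $\beta\lam^3\eta$, not Coriolis: with only $E^1$, at high frequencies $\eta\in L^\infty_t(\be[1])\cap L^1_t(\be[3])$ yields $\lam^3\eta\in L^1_t(\be[0])$ and no better, whereas $E^{1+\eps}$ puts $\eta$ in $L^1_t(\be[3+\eps])$ and hence, after interpolation, $\lam^3\eta\in L^{(3+\eps)/3}_t(\be[0])$. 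Your low-frequency dissipation picture is also inverted: there the damping on $\eta$ is of order $|\xi|^4$ (coming from the cross-term together with the capillary contribution $\beta\norm{\lam^3\dk\eta}_{L^2}^2$), which is why the low-frequency $L^1$-index for $\eta$ in $E^s$ is $s+3$ rather than $s+1$.
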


The paper is organized as follows. We recall some Littlewood-Paley
theories for homogeneous Besov spaces and give the definitions and
some properties of hybrid Besov spaces and Chemin-Lerner type spaces
in the second section. In Section 3, we are dedicated into proving of the a
priori estimates. In Section 4, we prove the global existence and
uniqueness of solution for small initial data by using a classical
Friedrichs' regularization method and compactness arguments.

\section{Littlewood-Paley theory and Besov spaces}
Let $\psi : \Real^2 \to [0,1]$ be a radial smooth cut-off function
valued in $[0,1]$ such that
\begin{align*}
    \psi(\xi)=\left\{
    \begin{array}{ll}
    1, &\abs{\xi}\ls 3/4,\\
    \text{smooth}, &3/4<\abs{\xi}<4/3,\\
    0, &\abs{\xi}\gs 4/3.
    \end{array}
    \right.
\end{align*}
Let $\varphi(\xi)$ be the function
\begin{align*}
    \varphi(\xi):=\psi(\xi/2)-\psi(\xi).
\end{align*}
Thus, $\psi$ is supported in the ball $\set{\xi\in\Real^2:
\abs{\xi}\ls 4/3}$, and $\varphi$ is also a smooth cut-off function
valued in $[0,1]$ and supported in the annulus $\{\xi:
3/4\ls\abs{\xi}\ls 8/3\}$.
 By construction, we have
\begin{align*}
    \sum_{k\in\Z}\varphi(2^{-k}\xi)=1, \quad \forall
    \xi\neq 0.
\end{align*}
One can define the dyadic blocks as follows. For $k\in\Z$, let
\begin{align*}
\dk f:=\F^{-1}\varphi(2^{-k}\xi)\F f.
\end{align*}
The formal decomposition
\begin{align}\label{lpd}
    f=\sum_{k\in\Z}\dk f
\end{align}
is called homogeneous Littlewood-Paley decomposition. Actually, this
decomposition works for just about any locally integrable function
which yields some decay at infinity, and one usually has all the
convergence properties of the summation that one needs. Thus, the
r.h.s. of \eqref{lpd} does not necessarily converge in
$\Sz'(\Real^2)$. Even if it does, the equality is not always true in
$\Sz'(\Real^2)$. For instance, if $f\equiv 1$, then all the
projections $\dk f$ vanish. Nevertheless, \eqref{lpd} is true modulo
polynomials, in other words (cf.\cite{Dan05,Pee76}), if
$f\in\Sz'(\Real^2)$, then $\sum_{k\in\Z}\dk f$ converges modulo
$\mathscr{P}[\Real^2]$ and \eqref{lpd} holds in
$\Sz'(\Real^2)/\mathscr{P}[\Real^2]$.

\begin{definition}
Let $s\in\Real$, $1\ls p,\,q\ls \infty$. For $f\in\Sz'(\Real^2)$, we
write
\begin{align*}
    \norm{f}_{\be[s]}=\sum_{k\in\Z}
    2^{ks}\norm{\dk f}_{L^2}.
\end{align*}
\end{definition}

A difficulty comes from the choice of homogeneous spaces at this point.
Indeed, $\norm{\cdot}_{\be[s]}$ cannot be a norm on
$\{f\in\Sz'(\Real^2): \norm{f}_{\be[s]}<\infty\}$ because
$\norm{f}_{\be[s]}=0$ means that $f$ is a polynomial. This
enforces us to adopt the following definition for homogeneous Besov
spaces (cf. \cite{Dan01}).

\begin{definition}
Let $s\in\Real$ and $m=-[2-s]$. If $m<0$, then we define
$\be[s](\Real^2)$ as
\begin{align*}
    \be[s]=\Big\{f\in\Sz'(\Real^2):
    \norm{f}_{\be[s]}<\infty \text{ and } f=\sum_{k\in\Z}\dk f
    \text{ in } \Sz'(\Real^2)\Big\}.
\end{align*}
If $m\gs 0$, we denote by $\mathscr{P}_m$ the set of two variables
polynomials of degree less than or equal to $m$ and define
\begin{align*}
    \be[s]=\Big\{f\in\Sz'(\Real^2)/\mathscr{P}_m:
    \norm{f}_{\be[s]}<\infty \text{ and }f=\sum_{k\in\Z}\dk f
    \text{ in } \Sz'(\Real^2)/\mathscr{P}_m\Big\}.
\end{align*}
\end{definition}

For the composition of functions, we have the following estimates.

\begin{lemma}[\mbox{\cite[Lemma 2.7]{Dan01}}]\label{lem.comp}
Let $s>0$ and $u\in \be[s]\cap L^\infty$.

\mbox{\rm i) } Let $F\in W_{loc}^{[s]+2,\infty}(\Real^2)$ such that
$F(0)=0$. Then $F(u)\in\be[s]$. Moreover, there exists a function of
one variable $C_0$ depending only on $s$ and $F$, and such that
\begin{align*}
    \norm{F(u)}_{\be[s]}\ls
    C_0(\norm{u}_{L^\infty})\norm{u}_{\be[s]}.
\end{align*}

\mbox{\rm ii)} If $u,\, v\in\be[1]$, $(v-u)\in \be[s]$ for a
$s\in(-1,1]$ and $G\in W_{loc}^{4,\infty}(\Real^2)$ satisfies
$G'(0)=0$, then $G(v)-G(u)\in \be[s]$ and there exists a function of
two variables $C$ depending only on $s$ and $G$, and such that
\begin{align*}
    \norm{G(v)-G(u)}_{\be[s]}\ls C(\norm{u}_{L^\infty},
    \norm{v}_{L^\infty})\left(\norm{u}_{\be[1]}+\norm{v}_{\be[1]}\right)
    \norm{v-u}_{\be[s]}.
\end{align*}
\end{lemma}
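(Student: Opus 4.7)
\emph{Proof plan for part (i).} The plan is to argue by a telescoping/paralinearization identity. Since $F(0)=0$ and $\sk u\to 0$, $\sk u\to u$ as $k\to-\infty$ and $k\to+\infty$ respectively (in $L^\infty$, using $u\in\be[s]\cap L^\infty$ with $s>0$), one writes
\begin{align*}
&F(u)=\sum_{k\in\Z}\bigl[F(\sk[k+1]u)-F(\sk u)\bigr]=\sum_{k\in\Z}m_k\,\dk u,\\
&m_k:=\int_0^1 F'\bigl(\sk u+\tau\dk u\bigr)\,d\tau,
\end{align*}
using the first-order Taylor identity $F(a+b)-F(a)=b\int_0^1 F'(a+\tau b)\,d\tau$ with $a=\sk u$, $b=\dk u$. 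Although $m_k$ is not spectrally localized, its argument has spectrum in a ball of radius $\ls C\cdot 2^k$, so the chain/Leibniz rules together with $F\in W_{loc}^{[s]+2,\infty}$ produce the Bernstein-type bound
\begin{align*}
\norm{\nabla^N m_k}_{L^\infty}\ls C(\norm{u}_{L^\infty})\,2^{kN},\qquad 0\ls N\ls[s]+1.
\end{align*}
I would then estimate $\norm{\dk[j](m_k\dk u)}_{L^2}$ in two regimes: for $j\ls k$ by the trivial bound $\norm{m_k}_{L^\infty}\norm{\dk u}_{L^2}$, and for $j>k$ by moving $N$ derivatives onto $m_k\dk u$ (equivalently, distributing $2^{jN}$ and invoking Bernstein both on $m_k$ and on $\dk u$) to gain a factor $2^{-(j-k)N}$. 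Choosing $N=[s]+1>s$, the geometric sums in $j$ around $k$ collapse to $\sum_j 2^{js}\norm{\dk[j](m_k\dk u)}_{L^2}\ls C(\norm{u}_{L^\infty})\,2^{ks}\norm{\dk u}_{L^2}$, and summing in $k$ yields the claim.

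\emph{Proof plan for part (ii).} The plan is to reduce to part (i) via the integral representation
\begin{align*}
G(v)-G(u)=(v-u)\int_0^1 G'\bigl(u+\tau(v-u)\bigr)\,d\tau =:(v-u)\,H.
\end{align*}
Since $G'(0)=0$ and $G'\in W_{loc}^{3,\infty}=W_{loc}^{[1]+2,\infty}$, part (i) at the exponent $s=1$ applied to $G'$ composed with $u+\tau(v-u)\in\be[1]\cap L^\infty$, followed by integration in $\tau$, gives
\begin{align*}
\norm{H}_{\be[1]}\ls C(\norm{u}_{L^\infty},\norm{v}_{L^\infty})\bigl(\norm{u}_{\be[1]}+\norm{v}_{\be[1]}\bigr).
\end{align*}
The conclusion then follows from the dimension-two product law $\norm{fg}_{\be[s]}\ls C\norm{f}_{\be[1]}\norm{g}_{\be[s]}$ valid for $s\in(-1,1]$, obtained by Bony's paraproduct decomposition (the upper endpoint reflects $\be[1]\hookrightarrow L^\infty$ and the lower endpoint the range in which the remainder term makes sense in $\be[s]$), applied with $f=H$ and $g=v-u$.

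\emph{Main obstacle.} The hard part will be the uniform Bernstein control of $m_k$ in part (i): converting the mere $L^\infty_{loc}$-boundedness of the first $[s]+2$ derivatives of $F$ into the pointwise estimate $\norm{\nabla^N m_k}_{L^\infty}\ls C(\norm{u}_{L^\infty})\,2^{kN}$ requires careful bookkeeping in the Fa\`a di Bruno expansion, exploiting at every occurrence of $\nabla$ the spectral localization of $\sk u$ and $\dk u$ (each such derivative costs one factor of $2^k$). Once this bound is secured, the two-regime summation in part (i) and the paraproduct argument in part (ii) are essentially mechanical; the upper bound $s\ls 1$ in part (ii) is dictated solely by the range of validity of the product law invoked there, not by any genuine difficulty in the difference estimate itself.
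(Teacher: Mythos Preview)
The paper does not prove this lemma at all: it is simply quoted from \cite[Lemma 2.7]{Dan01} and used as a black box, so there is no ``paper's own proof'' to compare against. Your sketch is the standard route (Meyer's telescoping/first-linearization for part~(i), followed by the mean-value representation and the product law $\be[1]\times\be[s]\hookrightarrow\be[s]$ for part~(ii)) and is essentially how the result is established in Danchin's work; nothing in your outline is off.
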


We also need hybrid Besov spaces for which regularity assumptions are different in low frequencies and high frequencies \cite{Dan01}. We are going to recall the definition of these new spaces and some of their main properties.

\begin{definition}
Let $s,\,t\in\Real$. We define
\begin{align*}
    \norm{f}_{\hbe[s]{t}}=\sum_{k\ls 0}2^{ks}\norm{\dk f}_{L^2}
    +\sum_{k>0}2^{kt}\norm{\dk f}_{L^2}.
\end{align*}
Let $m=-[2-s]$, we then define
\begin{align*}
    \hbe[s]{t}(\Real^2)=&\set{f\in\Sz'(\Real^2): \norm{f}_{\hbe[s]{t}}<\infty}, \quad \text{if } m<0,\\
    \hbe[s]{t}(\Real^2)=&\set{f\in\Sz'(\Real^2)/\mathscr{P}_m: \norm{f}_{\hbe[s]{t}}<\infty}, \quad \text{if } m \gs 0.
\end{align*}
\end{definition}

\begin{lemma} We have the following inclusions.

(i) We have $\hbe[s]{s}=\be[s]$.

(ii) If $s\ls t$ then $\hbe[s]{t}=\be[s]\cap\be[t]$. Otherwise, $\hbe[s]{t}=\be[s]+\be[t]$.

(iii) The space $\hbe[0]{s}$ coincides with the usual inhomogeneous Besov space $B_{2,1}^s$.

(iv) If $s_1\ls s_2$ and $t_1\gs t_2$, then $\hbe[s_1]{t_1}\hookrightarrow \hbe[s_2]{t_2}$.
\end{lemma}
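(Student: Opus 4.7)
The plan is to unwind the definition
\[
\|f\|_{\hbe[s]{t}}=\sum_{k\ls 0}2^{ks}\|\dk f\|_{L^2}+\sum_{k>0}2^{kt}\|\dk f\|_{L^2}
\]
and compare it termwise with the norms appearing on the right-hand side of each item. The only analytic ingredient beyond bookkeeping is Littlewood--Paley almost orthogonality, $\dk\triangle_{k'}=0$ for $|k-k'|\gs 2$, which allows one to identify the low- and high-frequency pieces of $f$ with honest Besov elements.

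Parts (i) and (iv) are immediate. For (i), when $s=t$ both weights $2^{ks}$ and $2^{kt}$ collapse to a single $2^{ks}$, so $\|f\|_{\hbe[s]{s}}=\|f\|_{\be[s]}$; the quotient parameter $m=-[2-s]$ is the same in both definitions, so the underlying spaces agree. For (iv), on $k\ls 0$ we have $2^k\ls 1$, so $s_1\ls s_2$ yields $2^{ks_2}\ls 2^{ks_1}$, while on $k>0$ we have $2^k>1$, so $t_2\ls t_1$ yields $2^{kt_2}\ls 2^{kt_1}$. Summing termwise gives $\|f\|_{\hbe[s_2]{t_2}}\ls\|f\|_{\hbe[s_1]{t_1}}$, hence the claimed embedding.

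For (ii) I would argue case by case. If $s\ls t$, the weight $2^{ks}$ dominates $2^{kt}$ on $k\ls 0$ and $2^{kt}$ dominates $2^{ks}$ on $k>0$, so regrouping $\|f\|_{\be[s]}+\|f\|_{\be[t]}$ according to the sign of $k$ yields an expression equivalent to $\|f\|_{\hbe[s]{t}}$. If $s>t$, set $f_\ell:=\sum_{k\ls 0}\dk f$ and $f_h:=\sum_{k>0}\dk f$. Almost orthogonality forces $\dk f_\ell=0$ for $k\gs 2$ and $\dk f_h=0$ for $k\ls -1$, so $\|f_\ell\|_{\be[s]}\sim\sum_{k\ls 0}2^{ks}\|\dk f\|_{L^2}$ and $\|f_h\|_{\be[t]}\sim\sum_{k>0}2^{kt}\|\dk f\|_{L^2}$; this realizes every $f\in\hbe[s]{t}$ as an element of $\be[s]+\be[t]$ with controlled sum norm. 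Conversely, for any decomposition $f=g+h$ with $g\in\be[s]$, $h\in\be[t]$, splitting the frequencies $k\ls 0$ vs.\ $k>0$ and using monotonicity of the weights on each regime produces $\|f\|_{\hbe[s]{t}}\ls \|g\|_{\be[s]}+\|h\|_{\be[t]}$; taking the infimum yields the opposite bound.

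For (iii), the high-frequency portion of $\hbe[0]{s}$ agrees on the nose with the high-frequency part $\sum_{k\gs 1}2^{ks}\|\dk f\|_{L^2}$ of the usual inhomogeneous norm on $B_{2,1}^s$, while the low-frequency portion $\sum_{k\ls 0}\|\dk f\|_{L^2}$ coincides, under Danchin's convention for the inhomogeneous Paley decomposition, with the single low-frequency block measured in $L^2$. Since $m=-[2-0]=-2<0$, no polynomial quotient appears and both sides sit inside $\Sz'(\Real^2)$. The step I expect to be the main obstacle is the ``$+$'' half of (ii): one must produce an explicit additive decomposition of $f$ whose summands have the correct Besov membership, and verify that the infimum norm on $\be[s]+\be[t]$ is equivalent (not merely bounded by) the hybrid norm. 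The remaining parts reduce to direct bookkeeping with dyadic weights and with the polynomial quotient parameter $m$.
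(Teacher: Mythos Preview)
The paper states this lemma without proof; it is recalled as a standard fact about hybrid Besov spaces, with the implicit reference to Danchin's earlier work \cite{Dan01}. There is therefore no ``paper's own proof'' to compare your attempt against.

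That said, your sketch for (i), (ii), and (iv) is the standard argument and is essentially correct; the almost-orthogonality bookkeeping you outline for the ``$+$'' direction of (ii) is exactly what is needed, and the boundary blocks $k\in\{-1,0,1\}$ are harmless since only finitely many overlap terms arise. Your treatment of (iii), however, is too quick: the low-frequency portion $\sum_{k\ls 0}\|\dk f\|_{L^2}$ is \emph{not} in general equivalent to the single inhomogeneous block $\|\psi(D)f\|_{L^2}$, since the former is an $\ell^1$ sum and the latter only controls an $\ell^2$ sum via Plancherel. In fact one only gets the embedding $\hbe[0]{s}\hookrightarrow B_{2,1}^s$ this way, and the reverse containment fails for general tempered distributions (take $\hat f$ with a suitable logarithmic singularity at the origin). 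The identification in (iii) should thus be read either as this one-sided embedding, or under an additional convention on the inhomogeneous norm that you would need to make explicit; the paper does not clarify this point.
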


Let us now recall some useful estimates for the product in hybrid Besov spaces.

\begin{lemma}[\mbox{\cite[Proposition 2.10]{Dan01}}]\label{lem.fgbesov}
Let $s_1,\, s_2>0$ and $f,\,g\in L^\infty\cap \hbe[s_1]{s_2}$. Then $fg\in \hbe[s_1]{s_2}$ and
\begin{align*}
    \norm{fg}_{\hbe[s_1]{s_2}}\lesssim
    \norm{f}_{L^\infty}\norm{g}_{\hbe[s_1]{s_2}}
    +\norm{f}_{\hbe[s_1]{s_2}}\norm{g}_{L^\infty}.
\end{align*}
Let $s_1, s_2, t_1, t_2\ls 1$ such that $\min(s_1+s_2, t_1+t_2)>0$, $f\in \hbe[s_1]{t_1}$ and $g\in
\hbe[s_2]{t_2}$. Then $fg\in \hbe[s_1+s_2-1]{t_1+t_2-1}$ and
\begin{align*}
    \norm{fg}_{\hbe[s_1+s_2-1]{t_1+t_2-1}}\lesssim
    \norm{f}_{\hbe[s_1]{t_1}}\norm{g}_{\hbe[s_2]{t_2}}.
\end{align*}
\end{lemma}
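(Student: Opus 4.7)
The plan is to establish both estimates via Bony's homogeneous paraproduct decomposition
\begin{align*}
    fg = T_f g + T_g f + R(f,g),
\end{align*}
with $T_f g := \sum_{j\in\Z}\sk[j-1] f\,\dk[j] g$ and $R(f,g) := \sum_{j\in\Z}\dk[j] f\,\widetilde{\dk[j]} g$, where $\widetilde{\dk[j]}$ denotes a slightly enlarged dyadic projector. The hybrid norm $\norm{\cdot}_{\hbe[s_1]{s_2}}$ splits $\Z$ into $\set{k\ls 0}$ (weight $2^{ks_1}$) and $\set{k>0}$ (weight $2^{ks_2}$), so for each of the three pieces I would bound $\norm{\dk(\cdot)}_{L^2}$, multiply by the appropriate hybrid weight, and sum on $k$, relying on the frequency localization of paraproduct and remainder to constrain the inner summation in $j$.

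For the first estimate (both factors in $L^\infty\cap \hbe[s_1]{s_2}$), the frequency support of $\dk T_f g$ is concentrated on $\abs{k-j}\ls N$, so
\begin{align*}
    \norm{\dk T_f g}_{L^2}\lesssim \norm{f}_{L^\infty}\sum_{\abs{k-j}\ls N}\norm{\dk[j] g}_{L^2};
\end{align*}
weighting and Fubini then yield $\norm{f}_{L^\infty}\norm{g}_{\hbe[s_1]{s_2}}$, with the finitely many indices that straddle $k=0$ absorbed by comparing the two hybrid weights. The paraproduct $T_g f$ is symmetric. For the remainder, the support condition forces $j\gs k-N$, and reversing the order of summation produces a geometric sum $\sum_{k\ls j+N}2^{ks}$ in each frequency regime; this converges precisely because $s_1,s_2>0$, yielding the same bound.

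For the second estimate the structure is identical, but the missing $L^\infty$ control must be recovered via the two-dimensional Bernstein inequality $\norm{\sk[j-1] f}_{L^\infty}\lesssim\sum_{j'\ls j-2}2^{j'}\norm{\dk[j'] f}_{L^2}$. Splitting this inner sum at $j'=0$, the hypothesis $s_1\ls 1$ converts the $j'\ls 0$ part to $O(\norm{f}_{\hbe[s_1]{t_1}})$, while $t_1\ls 1$ converts the $j'>0$ part to $O(2^{j(1-t_1)}\norm{f}_{\hbe[s_1]{t_1}})$. Plugged into the paraproduct, together with the analogous factor for $g$, this exactly produces the gain $2^{k(s_1+s_2-1)}$ at low frequency and $2^{k(t_1+t_2-1)}$ at high frequency. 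The remainder is handled by the same summation reversal as before, now requiring $s_1+s_2>0$ and $t_1+t_2>0$ to make the resulting geometric sum converge.

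The main obstacle, and essentially the only subtlety, is the careful bookkeeping at the frequency junction $k\sim 0$: a dyadic block $\dk(fg)$ there mixes contributions whose factors may live at low or high frequency, so each piece of Bony's decomposition must be split and recombined using all four hybrid weights $s_1,t_1,s_2,t_2$ without loss. Verifying that the conditions $s_i,t_i\ls 1$ together with the sum-positivity simultaneously deliver convergent geometric series in both $j\ls 0$ and $j>0$, and that the $O(1)$ ``border'' indices produce only a harmless loss absorbed into the implicit constant, is the technical heart of the argument; apart from this bookkeeping the proof reduces to the classical paraproduct/remainder estimates in homogeneous Besov spaces.
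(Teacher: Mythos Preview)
The paper does not provide its own proof of this lemma; it is quoted verbatim from \cite[Proposition 2.10]{Dan01} and used as a black box, so there is no in-paper argument to compare against. Your approach via Bony's paraproduct decomposition $fg = T_f g + T_g f + R(f,g)$ is exactly the standard route by which such hybrid-Besov product estimates are proved (and is, in outline, how Danchin establishes them in the cited reference): the first inequality follows from the usual $L^\infty$ paraproduct bound together with the remainder estimate, which requires $s_1,s_2>0$ for the geometric sum to converge; the second inequality replaces the $L^\infty$ bound on $\sk[j-1]f$ by the two-dimensional Bernstein inequality, and the hypotheses $s_i,t_i\ls 1$ and $\min(s_1+s_2,t_1+t_2)>0$ are precisely what make the resulting sums convergent. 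Your identification of the frequency-junction bookkeeping at $k\sim 0$ as the only genuine technical wrinkle is accurate; once that is handled by absorbing the finitely many overlapping indices into the constant, the argument goes through.
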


In the context of this paper, we also need to use the interpolation spaces of hybrid Besov spaces together with a time space such as $L^p(0,T;\hbe[s]{t})$. Thus, we have to introduce the Chemin-Lerner type space which is a refinement of the space $L^p(0,T;\hbe[s]{t})$.

\begin{definition}
Let $p\in [1,\infty]$, $T\in(0,\infty]$ and $s_1,\, s_2\in\Real$. Then we define
\begin{align*}
    \norm{f}_{\te[L]_T^p(\hbe[s]{t})}=\sum_{k\ls 0}2^{ks}\norm{\dk f}_{L^p(0,T; L^2)}+\sum_{k> 0}2^{kt}\norm{\dk f}_{L^p(0,T; L^2)}.
\end{align*}
\end{definition}

Noting that Minkowski's inequality yields $\norm{f}_{L_T^p(\hbe[s]{t})}\ls \norm{f}_{\te[L]_T^p(\hbe[s]{t})}$, we define spaces $\te[L]_T^p(\hbe[s]{t})$ as follows
\begin{align*}
    \te[L]_T^p(\hbe[s]{t})=\set{f\in L_T^p(\hbe[s]{t}): \norm{f}_{\te[L]_T^p(\hbe[s]{t})}<\infty}.
\end{align*}
If $T=\infty$, then we omit the subscript $T$ from the notation $\te[L]_T^p(\hbe[s]{t})$, that is, $\te[L]^p(\hbe[s]{t})$ for simplicity. We will denote by $\te[\C]([0,T];\hbe[s]{t})$ the subset of functions of $\te[L]_T^\infty(\hbe[s]{t})$ which are continuous on $[0,T]$ with values in $\hbe[s]{t}$.

Let us observe that $L_T^1(\hbe[s]{t})=\te[L]_T^1(\hbe[s]{t})$, but the embedding $\te[L]_T^p(\hbe[s]{t})\subset L_T^p(\hbe[s]{t})$ is strict if $p>1$.

We will use the following interpolation property which can be verified easily (cf. \cite{Ber76}).

\begin{lemma}\label{lem.inter}
Let $s,t,s_1,t_1, s_2,t_2\in\Real$ and $p,p_1,p_2\in[1,\infty]$. We have
\begin{align*}
    \norm{f}_{\te[L]_T^p(\hbe[s]{t})}\ls \norm{f}_{\te[L]_T^{p_1}(\hbe[s_1]{t_1})}^\theta \norm{f}_{\te[L]_T^{p_2}(\hbe[s_2]{t_2})}^{1-\theta},
\end{align*}
where $\frac{1}{p}=\frac{\theta}{p_1}+\frac{1-\theta}{p_2}$, $s=\theta s_1+(1-\theta)s_2$ and $t=\theta t_1+(1-\theta)t_2$.
\end{lemma}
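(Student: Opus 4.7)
\emph{Proof plan.} This is a standard convexity inequality, and my plan is a two-step Hölder argument that exploits the fact that $1/p$, $s$, and $t$ are all given as the \emph{same} convex combination, with parameter $\theta\in[0,1]$, of their endpoint values.

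First, I would freeze a dyadic block $k\in\Z$ and apply scalar interpolation in Lebesgue spaces to the nonnegative function $t\mapsto\norm{\dk f(t,\cdot)}_{L^2}$, using $\frac{1}{p}=\frac{\theta}{p_1}+\frac{1-\theta}{p_2}$, to obtain
\begin{align*}
    \norm{\dk f}_{L^p(0,T;L^2)}\ls\norm{\dk f}_{L^{p_1}(0,T;L^2)}^\theta\,\norm{\dk f}_{L^{p_2}(0,T;L^2)}^{1-\theta}.
\end{align*}
Second, the hypotheses $s=\theta s_1+(1-\theta)s_2$ and $t=\theta t_1+(1-\theta)t_2$ factor the hybrid Besov weights as $2^{ks}=(2^{ks_1})^\theta(2^{ks_2})^{1-\theta}$ for $k\ls 0$ and $2^{kt}=(2^{kt_1})^\theta(2^{kt_2})^{1-\theta}$ for $k>0$. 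Plugging both factorizations into the definition of the Chemin-Lerner norm and applying a single Hölder inequality in $\ell^1(\Z)$ with conjugate exponents $1/\theta$ and $1/(1-\theta)$ merges the low- and high-frequency pieces into
\begin{align*}
    \norm{f}_{\te[L]_T^p(\hbe[s]{t})}\ls\norm{f}_{\te[L]_T^{p_1}(\hbe[s_1]{t_1})}^\theta\,\norm{f}_{\te[L]_T^{p_2}(\hbe[s_2]{t_2})}^{1-\theta},
\end{align*}
which is the claim. There is no analytic obstacle: the time integration sitting inside the block $L^2$-norm of the Chemin-Lerner space is exactly what allows the first (time-) Hölder to be applied blockwise before the second (frequency-) Hölder. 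The only subtle point is the bookkeeping, namely checking that the single parameter $\theta$ interpolates simultaneously the time integrability and both regimes (low- and high-frequency) of the hybrid Besov regularity, so that the two Hölder steps chain cleanly.
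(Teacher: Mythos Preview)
Your two-step H\"older argument is correct and is exactly the standard verification the paper has in mind; the paper itself does not spell out a proof but simply states that the inequality ``can be verified easily'' with a reference to \cite{Ber76}. There is nothing to compare: you have supplied the routine details the authors omitted.
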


Now, we define the following work space.

\begin{definition}\label{est.def}
For $T>0$ and $s\in\Real$, we denote
\begin{align*}
    E_T^s=&\Big\{(\h,\vu)\in
    \te[\C]\left([0,T];\hbe{s}\right)\cap
    L^1\left(0,T;\hbe[s+3]{s+2}\right)\\
    &\qquad\qquad\times\left(\te[\C]\left([0,T];\be[s-1]\right)
    \cap L^1\left(0,T;\be[s+1]\right)
    \right)^2\Big\}
\end{align*}
and
\begin{align*}
\norm{(\h,\vu)}_{E_T^s}=&\norm{\h}_{\te[L]_T^\infty(\hbe{s})
}+\norm{\vu}_{\te[L]_T^\infty(\be[s-1])}\\
&+\norm{\h}_{L_T^1(\hbe[s+3]{s+2})}
+\norm{\vu}_{L_T^1(\be[s+1])}.
\end{align*}

We use the notation $E^s$ if $T=+\infty$, changing $[0,T]$ into
$[0,+\infty)$ in the definition above.
\end{definition}

\section{A priori estimates}

Noticing that $ \dive
D(\vu)=\frac{1}{2}\nabla\dive\vu+\frac{1}{2}\Delta\vu$ and
substituting $\h$ by $\h+\hb$ in \eqref{sw}, we have
\begin{align}\label{swm}
\left\{\begin{aligned}
    &\h_t+\vu\cdot\nabla\h+\hb\dive\vu=-\h\dive\vu,\\
    &\vu_t+\vu\cdot\nabla\vu-\mu\Delta\vu-3\mu\nabla\dive\vu
    +\f\vu^\perp+\g\nabla\h-\beta\nabla\Delta\h\\
    &\qquad\qquad\qquad
    =2\mu\frac{\nabla\h D(\vu)+\nabla\h\dive\vu}{\h+\hb},\\
    &\h(0)=\h_0-\hb, \quad \vu(0)=\vu_0.
    \end{aligned}\right.
\end{align}

For all $s\in\Real$, we denote
$\lam^sf=\F^{-1}(\abs{\xi}^s\hat{f})$. Let $c=\lam^{-1}\dive\vu$ and
$\inc=\lam^{-1}\divp\vu$ where $\divp \vu=\nabla^\perp\cdot\vu$ and
$\nabla^\perp=(-\partial_2,\partial_1)$. Then, it is easy to check
that
\begin{align*}
    \vu=-\lam^{-1}\nabla c-\lam^{-1}\nabla^\perp \inc.
\end{align*}

Now, we can rewrite the system \eqref{swm} in terms of these
notations as the following:
\begin{align}\label{swm1}
\left\{\begin{aligned}
    &\h_t+\vu\cdot\nabla\h+\hb\lam c=F,\\
    &c_t+\vu\cdot\nabla c-4\mu\Delta c-\f\inc-\g\lam\h
    -\beta\lam^3\h=G,\\
    &\inc_t-\mu\Delta\inc+\f c=\lam^{-1}\divp H,\\
    &\vu=-\lam^{-1}\nabla c-\lam^{-1}\nabla^\perp\inc,\\
    &\h(0)=\h_0-\hb, \quad \vu(0)=\vu_0,
    \end{aligned}\right.
\end{align}
where
\begin{align*}
    &F=-\h\dive\vu,\\
    &G=\vu\cdot\nabla c+\lam^{-1}\dive H,\\
    &H=-\vu\cdot\nabla\vu+2\mu\frac{\nabla\h D(\vu)
    +\nabla\h\dive\vu}{\h+\hb}.
\end{align*}

For these equations, we study the following system:
\begin{align}\label{swm2}
  \left\{\begin{aligned}
    &\h_t+\vv\cdot\nabla\h+\hb\lam c=F,\\
    &c_t+\vv\cdot\nabla c-4\mu\Delta c-\f\inc-\g\lam\h
    -\beta\lam^3\h=G,\\
    &\inc_t-\mu\Delta\inc+\f c=P,
  \end{aligned}\right.
\end{align}
where $\vv$ is a vector function and we will precise its regularity
in the following proposition.

\begin{proposition}\label{prop.conven}
   Let $(\h,c,\inc)$ be a solution of \eqref{swm2} on $[0,T)$, $T>0$, $0<s\ls
2$ and $V(t)=\int_0^t\norm{\vv(\tau)}_{\be[2]}d\tau$. The following
estimate holds on $[0,T)$:
\begin{align*}
    &\norm{\h}_{\te[L]_T^\infty(\hbe{s})}
    +\norm{c}_{\te[L]_T^\infty(\be[s-1])}
    +\norm{\inc}_{\te[L]_T^\infty(\be[s-1])}\\
    &\qquad+\int_0^t\Big(\norm{\h(\tau)}_{\hbe[s+3]{s+2}}
    +\norm{c(\tau)}_{\be[s+1]}+\norm{\inc(\tau)}_{\be[s+1]}\Big)d\tau\\
    \ls C &e^{CV(t)}\Big(\norm{\h(0)}_{\hbe{s}}
    +\norm{c(0)}_{\be[s-1]}+\norm{\inc(0)}_{\be[s-1]}\\
    &\qquad+\int_0^te^{-CV(\tau)}\left(\norm{F(\tau)}_{\hbe{s}}
    +\norm{G(\tau)}_{\be[s-1]}+\norm{P(\tau)}_{\be[s-1]}\right)d\tau\Big),
\end{align*}
where $C$ depends only on $s$, $\hb$ and coefficients $\mu$, $\f$, $\g$ and $\beta$.
\end{proposition}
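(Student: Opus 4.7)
The plan is to localize the system in frequency via the dyadic blocks $\dk$, derive $L^2$ energy estimates exploiting the algebraic structure of the coupling, sum with appropriate low/high frequency weights, and close via Gr\"onwall. Applying $\dk$ to the three equations in \eqref{swm2} produces commutators $[\dk,\vv\cdot\nabla]$ which satisfy a standard Besov commutator estimate of the form $\sum_k 2^{k(s-1)}\|[\dk,\vv\cdot\nabla]f\|_{L^2}\lesssim \|\vv\|_{\be[2]}\|f\|_{\be[s-1]}$; coupled with $\int(\vv\cdot\nabla f_k)f_k dx = -\tfrac{1}{2}\int(\dive\vv)f_k^2 dx$, these will ultimately be responsible for the exponential factor $e^{CV(t)}$.

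The heart of the argument is a carefully weighted energy for each localized block. Testing the $\h_k$-equation against $(\g/\hb)\h_k + (\beta/\hb)\lam^2\h_k$, the $c_k$-equation against $c_k$, and the $\inc_k$-equation against $\inc_k$, the Coriolis terms $\mp\f(c_k,\inc_k)$ cancel by antisymmetry, while the pressure/capillarity cross terms cancel by self-adjointness of $\lam$: $\g(\lam c_k,\h_k)=\g(\lam\h_k,c_k)$ and $\beta(\lam c_k,\lam^2\h_k)=\beta(\lam^3\h_k,c_k)$. This yields
\[
\tfrac{1}{2}\tfrac{d}{dt}E_k^2 + 4\mu\|\nabla c_k\|_{L^2}^2+\mu\|\nabla\inc_k\|_{L^2}^2 \lesssim \|\vv\|_{\be[2]}E_k^2 + (\text{source terms}),
\]
where $E_k^2:=\tfrac{\g}{\hb}\|\h_k\|^2+\tfrac{\beta}{\hb}\|\lam\h_k\|^2+\|c_k\|^2+\|\inc_k\|^2$. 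To extract dissipation for $\h$, I would add a small multiple $\eta\,\tfrac{d}{dt}(\h_k,c_k)_{L^2}$ to this identity; using the equations one gets a contribution $\eta\g\|\lam^{1/2}\h_k\|^2+\eta\beta\|\lam^{3/2}\h_k\|^2-\eta\hb\|\lam^{1/2}c_k\|^2$, and choosing $\eta$ small enough absorbs the negative $c$-piece into $4\mu\|\nabla c_k\|^2$. The pressure part $\g\|\lam^{1/2}\h_k\|^2$ dominates at low frequencies while the capillarity part $\beta\|\lam^{3/2}\h_k\|^2$ dominates at high frequencies — this frequency split is exactly what generates the hybrid gain $\h\in L^1(\hbe[s+3]{s+2})$ (four derivatives low, two high) against the $\h$ part of $E_k$ which itself is a hybrid $\|\h_k\|+\|\lam\h_k\|$.

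Finally, I would take square roots, weight by $2^{k(s-1)}$ for $k\ls 0$ and $2^{ks}$ for $k>0$ on the $\h$-part and by $2^{k(s-1)}$ throughout on $c,\inc$, and sum in $\ell^1(\Z)$. The resulting scalar differential inequality for the Chemin–Lerner norm of $(\h,c,\inc)$ up to time $t$, with forcing $C\|\vv(t)\|_{\be[2]}$ on its left and the $\hbe{s}\times\be[s-1]\times\be[s-1]$ norms of $F,G,P$ on its right, is closed by Gr\"onwall, producing the claimed $e^{CV(t)}$ factor. The main obstacle I anticipate is step three: designing the cross multiplier(s) so that the $\h$-dissipation comes out with the correct hybrid scaling, so that after summation both the low-frequency factor $2^{k(s+3)}$ and the high-frequency factor $2^{k(s+2)}$ are controlled by quantities that are already present in the energy — this is essentially the mechanism by which the pressure/capillarity coupling mimics parabolic smoothing on the height, and getting all the constants to line up (independent of $k$) in both regimes is the delicate computation. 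The restriction $0<s\ls 2$ will enter exactly through the commutator and paraproduct estimates required to convert the commutators into the norm $\|\vv\|_{\be[2]}V(t)$ uniformly.
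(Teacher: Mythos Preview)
Your overall strategy---localize, build a weighted $L^2$ energy with a cross term to extract $\h$-dissipation, sum, Gr\"onwall---is the paper's. But the specific cross multiplier you propose, $\eta\,\tfrac{d}{dt}(\h_k,c_k)$, does not close. It produces $\eta\g\|\lam^{1/2}\h_k\|^2+\eta\beta\|\lam^{3/2}\h_k\|^2-\eta\hb\|\lam^{1/2}c_k\|^2$; for $k\ls 0$ one has $\|\lam^{1/2}c_k\|^2\sim 2^{k}\|c_k\|^2\gg 2^{2k}\|c_k\|^2\sim\|\lam c_k\|^2$, so the negative piece cannot be absorbed into $4\mu\|\nabla c_k\|^2$ (moving it to the right would give exponential growth in $t$, not in $V(t)$). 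Even setting this aside, your $\h$-dissipation is only of order $2^{k}$ times the energy in both regimes, whereas the target $\h\in L^1(\hbe[s+3]{s+2})$ needs rate $2^{4k}$ for $k\ls 0$ and $2^{2k}$ for $k>0$. The paper achieves this by using \emph{two different} cross terms: $-2K_1(\lam\h_k,c_k)$ for $k>0$ (giving $\g\|\lam\h_k\|^2+\beta\|\lam^2\h_k\|^2$) and $-2K_2(\lam^3\h_k,c_k)$ for $k\ls 0$ (giving $\g\|\lam^2\h_k\|^2+\beta\|\lam^3\h_k\|^2$), which yield exactly the combined rate $2^{2k}\min(1,2^{2k})$ on $\alpha_k^2$. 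Note also that once a cross multiplier is present the Coriolis terms no longer fully cancel: a residual $\f K(\inc_k,\lam\h_k)$ (resp.\ $\f K(\lam\inc_k,\lam^2\h_k)$) appears and must be absorbed using $\mu\|\lam\inc_k\|^2$ together with the $\g$-dissipation, which constrains $K$.

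There is a second structural step you do not mention. Even with the correct cross terms, the $\alpha_k$-inequality only yields $\int 2^{k(s+1)}\min(1,2^{2k})(\|c_k\|+\|\inc_k\|)$, which is too weak at low frequencies to conclude $c,\inc\in L^1(\be[s+1])$. The paper therefore runs a \emph{separate} low-frequency argument with the simpler functional $\theta_k^2=\tfrac{\g}{\hb}\|\h_k\|^2+\|c_k\|^2+\|\inc_k\|^2$ (no $\lam\h_k$ piece, no cross term), treating the capillarity contribution $\beta\lam^3\h_k$ as a source that is already controlled by the first step; this recovers the pure parabolic rate $2^{2k}$ for $c,\inc$ when $k\ls 0$ and completes the estimate.
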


\begin{proof}
Let $(\h, c, \inc)$ be a solution of \eqref{swm2} and we set
\begin{align*}
    (\te[\h],\te[c],\te[\inc], \te[F], \te[G], \te[P])=e^{-KV(t)}(\h, c, \inc, F, G, P).
\end{align*}
Thus, \eqref{swm2} can be transformed into
\begin{align}\label{swm3}
  \left\{\begin{aligned}
    &\te[\h]_t+\vv\cdot\nabla\te[\h]+\hb\lam \te[c]=\te[F]-KV'(t)\te[\h],\\
    &\te[c]_t+\vv\cdot\nabla \te[c]-4\mu\Delta \te[c]-\f\te[\inc]-\g\lam\te[\h]
    -\beta\lam^3\te[\h]=\te[G]-KV'(t)\te[c],\\
    &\te[\inc]_t-\mu\Delta\te[\inc]+\f\te[c]=\te[P]-KV'(t)\te[\inc].
  \end{aligned}\right.
\end{align}

Applying the operator $\dk$ to the system \eqref{swm3},  we obtain
the following system by noting $(\te[\h]_k, \te[c]_k, \te[\inc]_k,
\te[F]_k, \te[G]_k, \te[P]_k)=(\dk\te[\h], \dk\te[c], \dk\te[\inc],
\dk\te[F],\dk\te[G], \dk\te[P])$:
\begin{align}\label{swm3k}
  \left\{\begin{aligned}
    &\partial_t\te[\h]_k+\dk(\vv\cdot\nabla\te[\h])+\hb\lam \te[c]_k
    =\te[F]_k-KV'(t)\te[\h]_k,\\
    &\partial_t\te[c]_k+\dk(\vv\cdot\nabla \te[c])-4\mu\Delta
    \te[c]_k-\f\te[\inc]_k-\g\lam\te[\h]_k
    -\beta\lam^3\te[\h]_k=\te[G]_k-KV'(t)\te[c]_k,\\
    &\partial_t\te[\inc]_k-\mu\Delta\te[\inc]_k+\f\te[c]_k=\te[P]_k
    -KV'(t)\te[\inc]_k.
  \end{aligned}\right.
\end{align}

To begin with, we consider the case where $\vv=\mathbf{0}$, $K=0$
and $F=G=P=0$ which implies that \eqref{swm3k} takes the form
\begin{align}\label{swm4}
  \left\{\begin{aligned}
    &\partial_t\te[\h]_k+\hb\lam \te[c]_k
    =0,\\
    &\partial_t\te[c]_k-4\mu\Delta
    \te[c]_k-\f\te[\inc]_k-\g\lam\te[\h]_k
    -\beta\lam^3\te[\h]_k=0,\\
    &\partial_t\te[\inc]_k-\mu\Delta\te[\inc]_k+\f\te[c]_k=0.
  \end{aligned}\right.
\end{align}

\subsection{The case of high frequencies}
Taking the $L^2$ scalar product of the first equation of
\eqref{swm4} with $\te[\h]_k$, of the second equation with
$\te[c]_k$, and the third one with $\te[\inc]_k$,  we get the
following two identities:
\begin{align}\label{swm41}
  \left\{\begin{aligned}
    &\frac{1}{2}\frac{d}{dt}\norm{\te[\h]_k}_{L^2}^2+\hb(\lam\te[c]_k,\te[\h]_k)=0,\\
    &\frac{1}{2}\frac{d}{dt}\norm{\te[c]_k}_{L^2}^2+4\mu\norm{\lam\te[c]_k}_{L^2}^2
    -\f(\te[\inc]_k,\te[c]_k)-\g(\lam\te[\h]_k,\te[c]_k)\\
    &\qquad\qquad\qquad\qquad\qquad-\beta(\lam^2\te[\h]_k,\lam\te[c]_k)=0,\\
    &\frac{1}{2}\frac{d}{dt}\norm{\te[\inc]_k}_{L^2}^2
    +\mu\norm{\lam\te[\inc]_k}_{L^2}^2+\f(\te[c]_k,\te[\inc]_k)=0.
  \end{aligned}\right.
\end{align}
Now we want to get an equality involving $\lam\te[\h]_k$. To achieve it, we take $L^2$ scalar product of the first equation of
\eqref{swm4} with $\lam^2\te[\h]_k$ and $\lam\te[c]_k$ respectively, then take the $L^2$ scalar product of the second equation with
$\lam\te[\h]_k$ and sum with both last two equalities. This yields
\begin{align}\label{swm42}
   \left\{\begin{aligned}
   &\frac{1}{2}\frac{d}{dt}\norm{\lam\te[\h]_k}_{L^2}^2
   +\hb(\lam^2\te[c]_k,\lam\te[\h]_k)=0,\\
   &\frac{d}{dt}(\lam\te[\h]_k,\te[c]_k)+\hb\norm{\lam\te[c]_k}_{L^2}^2
    -\f(\te[\inc]_k,\lam\te[\h]_k)-\g\norm{\lam \te[\h]_k}_{L^2}^2\\
    &\qquad\qquad\qquad-\beta\norm{\lam^2 \te[\h]_k}_{L^2}^2
    +4\mu(\lam\te[c]_k,\lam^2\te[\h]_k)=0.
    \end{aligned}\right.
\end{align}
Let $K_1>0$ be a constant to be chosen later and denote for $k>0$
\begin{align*}
    \alpha_k^2=\frac{\g}{\hb}\norm{\te[\h]_k}_{L^2}^2
    +\frac{\beta}{\hb}\norm{\lam\te[\h]_k}_{L^2}^2
    +\norm{\te[c]_k}_{L^2}^2+\norm{\te[\inc]_k}_{L^2}^2-2K_1(\lam\te[\h]_k,\te[c]_k).
\end{align*}
By a linear combination of \eqref{swm41} and \eqref{swm42}, we can
get
\begin{align}\label{swm43}
    \begin{aligned}
    \frac{1}{2}\frac{d}{dt}\alpha_k^2
    &+(4\mu-\hb K_1)\norm{\lam\te[c]_k}_{L^2}^2
    +\g K_1\norm{\lam \te[\h]_k}_{L^2}^2
    +\beta K_1\norm{\lam^2 \te[\h]_k}_{L^2}^2\\
    &+\mu \norm{\lam\te[d]_k}_{L^2}^2-4\mu
    K_1(\lam\te[c]_k,\lam^2\te[\h]_k)+\f K_1(\te[\inc]_k,\lam\te[\h]_k)
    =0.
    \end{aligned}
\end{align}

 Using Schwartz' inequality, Young's inequality and Bernstein's inequality
\begin{align*}
 \|\te[\inc]_k\|_{L^2} \ls \frac{4}{3}2^{-k}\|\lam\te[\inc]_k\|_{L^2},
\end{align*}
 we find,
for any positive numbers $M_1$, $M_2$, $M_3$, that
\begin{align*}
    \abs{(\lam\te[c]_k,\lam^2\te[\h]_k)}\ls& \frac{M_1}{2}
    \norm{\lam\te[c]_k}_{L^2}^2
    +\frac{1}{2M_1}\norm{\lam^2\te[\h]_k}_{L^2}^2,\\
    \abs{(\te[\inc]_k,\lam\te[\h]_k)}\ls& \frac{8M_2}{9}
    \norm{\lam\te[\inc]_k}_{L^2}^2
    +\frac{1}{2M_2}\norm{\lam\te[\h]_k}_{L^2}^2,\\
    \abs{(\lam\te[\h]_k,\te[c]_k)}
    \ls&\frac{M_3}{2}\norm{\lam\te[\h]_k}_{L^2}^2
    +\frac{1}{2M_3}\norm{\te[c]_k}_{L^2}^2.
\end{align*}
Thus, we need to determine the values of $K_1$, $M_1$, $M_2$
and $M_3$ such that
\begin{align*}
    4\mu-\hb K_1-4\mu K_1\frac{M_1}{2}>0, \quad
    \beta-\frac{4\mu}{2M_1}>0,\quad \g-\frac{\f}{2M_2}>0,\\
    \mu-\frac{8\f K_1M_2}{9}>0, \quad \frac{\beta}{\hb}-K_1 M_3>0, \quad
    1-\frac{K_1}{M_3}>0.
\end{align*}
One can verify that the above inequalities will hold if one has
\begin{align*}
&0<K_1<\min\left(\frac{4\mu\beta}{\hb\beta+5\mu^2},
\frac{2}{3}\sqrt{\frac{\beta}{\hb}},\frac{9\mu\g}{4\f^2}\right),\\
&M_1=\frac{5\mu}{2\beta},\;
 M_2=\frac{\f}{4\g}+\frac{9\mu}{16\f K_1},\;
M_3=\frac{2}{3}\sqrt{\frac{\beta}{\hb}}.
\end{align*}
Hence, we obtain
\begin{align}\label{swm44z}
    c_1\alpha_k^2\ls \norm{\te[\h]_k}_{L^2}^2
    +\norm{\lam\te[\h]_k}_{L^2}^2
    +\norm{\te[c]_k}_{L^2}^2+\norm{\te[\inc]_k}_{L^2}^2
    \ls c_2\alpha_k^2.
\end{align}

Therefore, there exists a constant $\dot{c}>0$ such that
\begin{align*}
    \frac{1}{2}\frac{d}{dt}\alpha_k^2+\dot{c}2^{2k}\alpha_k^2\ls
    0.
\end{align*}

In the general case where $F$, $G$, $P$, $K$ and $\vv$ are not zero,
we have, with the help of Lemma 6.2 in \cite{Dan01}, that
\begin{align*}
    &\frac{1}{2}\frac{d}{dt}\alpha_k^2+\left(\dot{c}2^{2k}+KV'\right)\alpha_k^2\\
    \ls&
    \frac{\g}{\hb}(\te[F]_k,\te[\h]_k)+(\te[G]_k,\te[c]_k)
    +(\te[P]_k,\te[c]_k)\\
    &+\frac{\beta}{\hb}
    (\lam\te[F]_k,\lam\te[\h]_k)
    -K_1(\lam\te[F]_k,\te[c]_k)
    -K_1(\te[G]_k,\lam\te[\h]_k)\\
    &-\frac{\g}{\hb}(\dk(\vv\cdot\nabla\te[\h]),\te[\h]_k)
    -(\dk(\vv\cdot\nabla\te[c]),\te[c]_k)
    -\frac{\beta}{\hb}(\lam\dk(\vv\cdot\nabla\te[\h]),\lam\te[\h]_k)\\
    &+K_1(\lam\dk(\vv\cdot\nabla\te[\h]),\te[c]_k)
    +K_1(\dk(\vv\cdot\nabla\te[c]),\lam\te[\h]_k) \\
    \lesssim& \alpha_k\Big(\norm{\te[F]_k}_{L^2}+\norm{\te[G]_k}_{L^2}
    +\norm{\te[P]_k}_{L^2}+\norm{\lam\te[F]_k}_{L^2}\\
    &+\gamma_k2^{-k(s-1)}
    \norm{\vv}_{\be[2]}\norm{\te[\h]}_{\be[s-1]}
    +\gamma_k2^{-k(s-1)}
    \norm{\vv}_{\be[2]}\norm{\te[c]}_{\be[s-1]}\\
    &+\gamma_k2^{-k(s-1)}
    \norm{\vv}_{\be[2]}\norm{\te[\h]}_{\be[s]}+\gamma_k\norm{\vv}_{\be[2]}(2^{-k(s-1)}
    \norm{\te[c]}_{\be[s-1]}\\
    &+2^{-k(s-1)}
    \norm{\te[\h]}_{\be[s]})
    \Big),
\end{align*}
where $\sum_k \gamma_k\ls 1$ and $s\in (0, 2]$.

\subsection{The case of low frequencies}
We replace the second equation of \eqref{swm42} by the following equation
\begin{align}\label{swm45}
\begin{aligned}
    &\frac{d}{dt}(\lam^3\te[\h]_k,\te[c]_k)+\hb\norm{\lam^2\te[c]_k}_{L^2}^2
    -\f(\lam\te[\inc]_k,\lam^2\te[\h]_k)-\g\norm{\lam^2 \te[\h]_k}_{L^2}^2\\
    &\qquad\qquad\qquad-\beta\norm{\lam^3 \te[\h]_k}_{L^2}^2
    +4\mu(\lam^2\te[c]_k,\lam^3\te[\h]_k)=0.
\end{aligned}
\end{align}
Let $K_2>0$ be a constant to be chosen later and denote for $k\ls 0$
\begin{align*}
    \alpha_k^2=\frac{\g}{\hb}\norm{\te[\h]_k}_{L^2}^2
    +\frac{\beta}{\hb}\norm{\lam\te[\h]_k}_{L^2}^2
    +\norm{\te[c]_k}_{L^2}^2+\norm{\te[\inc]_k}_{L^2}^2-2K_2(\lam^3\te[\h]_k,\te[c]_k).
\end{align*}
A linear combination of \eqref{swm41}, the first equation of \eqref{swm42} and \eqref{swm45} yields
\begin{align}\label{swm46}
    \begin{aligned}
    \frac{1}{2}\frac{d}{dt}\alpha_k^2
    &+4\mu\norm{\lam\te[c]_k}_{L^2}^2-\hb K_2\norm{\lam^2\te[c]_k}_{L^2}^2
    +\g K_2\norm{\lam^2 \te[\h]_k}_{L^2}^2\\
    &   +\beta K_2\norm{\lam^3 \te[\h]_k}_{L^2}^2 +\mu \norm{\lam\te[d]_k}_{L^2}^2-4\mu
    K_2(\lam^2\te[c]_k,\lam^3\te[\h]_k)\\
    &+\f K_2(\lam\te[\inc]_k,\lam^2\te[\h]_k)
    =0.
    \end{aligned}
\end{align}

Using Schwartz' inequality, Young's inequality and Bernstein's inequality
\begin{align*}
\|\lam\te[\h]_k\|_{L^2}\ls \frac{8}{3}2^{k}\|\te[\h]_k\|_{L^2},
\end{align*}
we find,
for any positive numbers $M_4$, $M_5$, $M_6$, that
\begin{align*}
    \abs{(\lam^2\te[c]_k,\lam^3\te[\h]_k)}\ls& \frac{8^2M_4}{2\cdot 3^2}
    \norm{\lam\te[c]_k}_{L^2}^2
    +\frac{1}{2M_4}\norm{\lam^3\te[\h]_k}_{L^2}^2,\\
    \abs{(\lam\te[\inc]_k,\lam^2\te[\h]_k)}\ls& \frac{M_5}{2}
    \norm{\lam\te[\inc]_k}_{L^2}^2
    +\frac{1}{2M_5}\norm{\lam^2\te[\h]_k}_{L^2}^2,\\
    \abs{(\lam^3\te[\h]_k,\te[c]_k)}
    \ls&\frac{8^4M_6}{2\cdot 3^4}\norm{\lam\te[\h]_k}_{L^2}^2
    +\frac{1}{2M_6}\norm{\te[c]_k}_{L^2}^2.
\end{align*}
Thus, we need to determine the values of $K_2$, $M_4$, $M_5$
and $M_6$ such that
\begin{align*}
    4\mu-\frac{8^2}{3^2}\hb K_2-4\mu K_2\frac{8^2M_4}{2\cdot 3^2}>0, \quad
    \beta-\frac{4\mu}{2M_4}>0,\quad \g-\frac{\f}{2M_5}>0,\\
    \mu-\frac{\f K_2M_5}{2}>0, \quad \frac{\beta}{\hb}-\frac{8^4}{2\cdot 3^4}K_2 M_6>0, \quad
    1-\frac{K_2}{M_6}>0.
\end{align*}
One can verify that the above inequalities will hold if one chooses
\begin{align*}
&0<K_2<\min\left(\frac{3^2\cdot4\mu\beta}{8^2(\hb\beta+5\mu^2)},
\frac{3^2}{8^2}\sqrt{\frac{\beta}{\hb}},\frac{4\mu\g}{\f^2}\right),\\
&M_4=\frac{5\mu}{2\beta},\;
 M_5=\frac{\f}{4\g}+\frac{\mu}{\f K_2},\;
M_6=\frac{3^2}{8^2}\sqrt{\frac{\beta}{\hb}}.
\end{align*}
Hence, we obtain
\begin{align}\label{swm44}
    c_3\alpha_k^2\ls \norm{\te[\h]_k}_{L^2}^2
    +\norm{\lam\te[\h]_k}_{L^2}^2
    +\norm{\te[c]_k}_{L^2}^2+\norm{\te[\inc]_k}_{L^2}^2
    \ls c_4\alpha_k^2.
\end{align}

Therefore, there exists a constant $\ddot{c}>0$ such that
\begin{align*}
    \frac{1}{2}\frac{d}{dt}\alpha_k^2+\ddot{c}2^{4k}\alpha_k^2\ls
    0.
\end{align*}

In the general case where $F$, $G$, $P$, $K$ and $\vv$ are not zero,
we have, with the help of Lemma 6.2 in \cite{Dan01}, that
\begin{align*}
    &\frac{1}{2}\frac{d}{dt}\alpha_k^2+\left(\ddot{c}2^{4k}+KV'\right)\alpha_k^2\\
    \ls&
    \frac{\g}{\hb}(\te[F]_k,\te[\h]_k)+(\te[G]_k,\te[c]_k)
    +(\te[P]_k,\te[c]_k)\\
    &+\frac{\beta}{\hb}
    (\lam\te[F]_k,\lam\te[\h]_k)
    -K_1(\lam^3\te[F]_k,\te[c]_k)
    -K_1(\te[G]_k,\lam^3\te[\h]_k)\\
    &-\frac{\g}{\hb}(\dk(\vv\cdot\nabla\te[\h]),\te[\h]_k)
    -(\dk(\vv\cdot\nabla\te[c]),\te[c]_k)
    -\frac{\beta}{\hb}(\lam\dk(\vv\cdot\nabla\te[\h]),\lam\te[\h]_k)\\
    &+K_1(\lam^3\dk(\vv\cdot\nabla\te[\h]),\te[c]_k)
    +K_1(\dk(\vv\cdot\nabla\te[c]),\lam^3\te[\h]_k) \\
    \lesssim& \alpha_k\Big(\norm{\te[F]_k}_{L^2}+\norm{\te[G]_k}_{L^2}
    +\norm{\te[P]_k}_{L^2}\\
    &+\gamma_k2^{-k(s-1)}
    \norm{\vv}_{\be[2]}\norm{\te[\h]}_{\be[s-1]}
    +\gamma_k2^{-k(s-1)}
    \norm{\vv}_{\be[2]}\norm{\te[c]}_{\be[s-1]}\\
    &+\gamma_k2^{-k(s-1)}
    \norm{\vv}_{\be[2]}\norm{\te[\h]}_{\be[s]}+\gamma_k\norm{\vv}_{\be[2]}(2^{-k(s-1)}
    \norm{\te[c]}_{\be[s-1]}\\
    &+2^{-k(s-1)}
    \norm{\te[\h]}_{\be[s-1]})
    \Big),
\end{align*}
where $\sum_k \gamma_k\ls 1$ and $s\in (0, 3]$.

Thus, combining two case of high and low frequencies, we obtain for any $k\in\Z$
\begin{align}\label{swm5}
\begin{aligned}
    &\frac{1}{2}\frac{d}{dt}\alpha_k^2
    +\left(\breve{c}2^{2k}\min(1,2^{2k})+KV'\right)\alpha_k^2\\
    \lesssim&\alpha_k\Big(\norm{\te[F]_k}_{L^2}+\norm{\te[G]_k}_{L^2}
    +\norm{\te[P]_k}_{L^2}
    +\norm{\lam\te[F]_k}_{L^2}\\
    &+\gamma_k2^{-k(s-1)}V'\norm{(\te[\h],\te[c])}_{\hbe{s}\times\be[s-1]}\Big),
\end{aligned}
\end{align}
where we choose $\breve{c}=\min(\dot{c},\ddot{c})$ and $V(t)=\int_0^t\norm{\vv}_{\be[2]}$.

We are now going to show that the inequality \eqref{swm5} implies a
decay for $\h$, $c$ and $\inc$.

\subsection{The damping effect for $\h$}
Dividing \eqref{swm5} by $\alpha_k$, we get
\begin{align}\label{swm6z}
\begin{aligned}
    &\frac{d}{dt}\alpha_k(t)
    +\left(\breve{c}2^{2k}\min(1,2^{2k})+KV'\right)\alpha_k\\
    \lesssim&\norm{\te[F]_k}_{L^2}+\norm{\te[G]_k}_{L^2}
    +\norm{\te[P]_k}_{L^2}
    +\norm{\lam\te[F]_k}_{L^2}
    +\gamma_k2^{-k(s-1)}V'
    \norm{(\te[\h],\te[c])}_{\hbe{s}\times\be[s-1]}.
\end{aligned}
\end{align}
Integrating over $[0,t]$, we have
\begin{align}\label{swm61}
\begin{aligned}
    &\alpha_k(t)
    +\breve{c}2^{2k}\min(1,2^{2k})\int_0^t \alpha_k(\tau)d\tau\\
    \lesssim&\alpha_k(0)+\int_0^t\Big(\norm{\te[F]_k(\tau)}_{L^2}
    +\norm{\te[G]_k(\tau)}_{L^2}
    +\norm{\te[P]_k(\tau)}_{L^2}
    +\norm{\lam\te[F]_k(\tau)}_{L^2}\Big)d\tau\\
    &+\int_0^tV'(\tau) \Big[\gamma_k(\tau)2^{-k(s-1)}
    \norm{(\te[\h],\te[c])}_{\hbe{s}\times\be[s-1]}-K\alpha_k(\tau)\Big]d\tau.
\end{aligned}
\end{align}
By the definition of $\alpha_k^2$, we have
\begin{align}\label{swm62}
\begin{aligned}
    2^{k(s-1)}\alpha_k\approx & 2^{ks}\max(1,2^{-k})\norm{\te[\h]_k}_{L^2}
    +2^{k(s-1)}\norm{\te[c]_k}_{L^2}\\
   & +2^{k(s-1)}\norm{\te[\inc]_k}_{L^2}, \quad \forall k\in\Z.
\end{aligned}
\end{align}
Thus, we have, by taking $K$ large enough, that
\begin{align*}
    \sum_{k\in\Z}\Big[\gamma_k(\tau)
    \norm{(\te[\h],\te[c])}_{\hbe{s}\times\be[s-1]}
    -K2^{k(s-1)}\alpha_k(\tau)\Big]\ls 0.
\end{align*}
Changing the functions $(\te[\h],\te[c],\te[\inc], \te[F],\te[G],
\te[P])$ into the original ones $(\h,c,\inc,F,G,P)$ and multiplying
both sides of \eqref{swm61} by $2^{k(s-1)}$. According to the last
inequality, and due to \eqref{swm61} and \eqref{swm62}, we
conclude after summation on $k$ in $\Z$, that
\begin{align}\label{swm63}
    \begin{aligned}
    &\norm{\h}_{\te[L]_T^\infty(\hbe{s})}
    +\norm{c}_{\te[L]_T^\infty(\be[s-1])}
    +\norm{\inc}_{\te[L]_T^\infty(\be[s-1])}+\breve{c}\int_0^t \norm{\h(\tau)}_{\hbe[s+3]{s+2}}d\tau\\
    &\qquad\qquad+\breve{c}\sum_{k\in\Z}\int_0^t 2^{k(s+1)}\min(1,2^{2k})\norm{c_k(\tau)}_{L^2}d\tau\\
    &\qquad\qquad+\breve{c}\sum_{k\in\Z}\int_0^t 2^{k(s+1)}\min(1,2^{2k})\norm{\inc_k(\tau)}_{L^2}d\tau\\
    \lesssim &e^{CV(t)}\norm{(\h(0),c(0),\inc(0))}_{\hbe{s}
    \times(\be[s-1])^2} \\ &\qquad\qquad+e^{CV(t)}\int_0^te^{-CV(\tau)}\norm{(F,G,P)(\tau)}_{\hbe{s}
    \times(\be[s-1])^2}d\tau.
    \end{aligned}
\end{align}

\subsection{The smoothing effects of $c$ and $\inc$}

Once the
damping effect for $h$ is established, it is easy to get the smoothing effect on
$c$ and $\inc$. Since \eqref{swm63} implies the desired estimate for high
frequencies, it suffices to prove it for low frequencies only. We
therefore suppose in this part that $k\ls 0$.

Taking the $L^2$ scalar product of the last two equations of \eqref{swm3k} with $\te[c]_k$ and $\te[\inc]_k$ respectively, we have
\begin{align}\label{swm7}
\left\{\begin{aligned}
    &\frac{1}{2}\frac{d}{dt}\norm{\te[\h]_k}_{L^2}^2
    +\hb(\te[c]_k,\lam \te[\h]_k)\\
    &\qquad=(\te[F]_k,\te[\h]_k)-KV'(t)\norm{\te[\h]_k}_{L^2}^2
    -(\dk(\vv\cdot\nabla\te[\h]),\te[\h]_k),\\
    &\frac{1}{2}\frac{d}{dt}\norm{\te[c]_k}_{L^2}^2
    +4\mu\norm{\lam\te[c]_k}_{L^2}^2
    -\f(\te[\inc]_k,\te[c]_k)-\g(\lam\te[\h]_k,\te[c]_k)
    -\beta(\lam^3\te[\h]_k,\te[c]_k)\\
    &\qquad=(\te[G]_k,\te[c]_k)-KV'(t)\norm{\te[c]_k}_{L^2}^2
    -(\dk(\vv\cdot\nabla\te[c]),\te[c]_k),\\
    &\frac{1}{2}\frac{d}{dt}\norm{\te[\inc]_k}_{L^2}^2
    +\mu\norm{\lam\te[\inc]_k}_{L^2}^2
    +\f(\te[\inc]_k,\te[c]_k)
    =(\te[P]_k,\te[\inc]_k)-KV'(t)\norm{\te[\inc]_k}_{L^2}^2.
\end{aligned}\right.
\end{align}

Define $\theta_k^2=\frac{\g}{\hb}\norm{\te[\h]_k}_{L^2}^2+\norm{\te[c]_k}_{L^2}^2+\norm{\te[\inc]_k}_{L^2}^2$. By using Lemma 6.2 in \cite{Dan01}, \eqref{swm7} yields, for a constant $c>0$, that
\begin{align*}
    \frac{1}{2}\frac{d}{dt}\theta_k^2+C2^{2k}\theta_k^2\lesssim &\theta_k(\norm{\lam^3\te[\h]_k}_{L^2}+\norm{\te[G]_k}_{L^2}
    +\norm{\te[P]_k}_{L^2})\\
    &+\theta_kV'(t)(C\gamma_k2^{-k(s-1)}
    (\norm{\te[c]}_{\be[s-1]}
    +\norm{\te[\h]}_{\be[s-1]})-K\theta_k).
\end{align*}
Dividing by $\theta_k$ and integrating over $[0,t]$, we infer
\begin{align*}
    \theta_k(t)&+C\int_0^t2^{2k}\theta_k(\tau)d\tau\ls \theta_k(0)
    +C\int_0^t[\norm{\te[G]_k(\tau)}_{L^2}
    +\norm{\te[P]_k(\tau)}_{L^2}]
    d\tau\\
    &+C\int_0^tV'(\tau)\gamma_k(\tau)2^{-k(s-1)}
    (\norm{\te[c](\tau)}_{\be[s-1]}
    +\norm{\te[\h](\tau)}_{\be[s-1]})d\tau.
\end{align*}
Therefore, changing the functions $(\te[\h],\te[c],\te[\inc], \te[F],\te[G], \te[P])$ into the original ones, we get
\begin{align*}
    &\sum_{k\ls0}2^{k(s-1)}\norm{\h_k(t)}_{\te[L]_T^\infty(L^2)}
    +\sum_{k\ls 0}2^{k(s-1)}\norm{c_k(t)}_{\te[L]_T^\infty(L^2)}\\
    &+\sum_{k\ls 0}2^{k(s-1)}\norm{\inc_k(t)}_{\te[L]_T^\infty(L^2)}+C\int_0^t \sum_{k\ls 0}2^{k(s+1)}\norm{c_k(\tau)}_{L^2}d\tau\\
    &    +C\int_0^t \sum_{k\ls 0}2^{k(s+1)}\norm{\inc_k(\tau)}_{L^2}d\tau\\
    \lesssim&e^{CV(t)}\norm{(\h(0),c(0),\inc(0))}_{\hbe{s}\times(\be[s-1])^2}
    +\int_0^t e^{CV(t-\tau)}\norm{(G(\tau),P(\tau))}_{(\be[s-1])^2}d\tau\\
    &+e^{CV(t)}
    (\norm{c}_{\te[L]_t^\infty(\be[s-1])}
    +\norm{\h(\tau)}_{\te[L]_T^\infty(\hbe[s-1]{s})})
\end{align*}
Using \eqref{swm63}, we eventually conclude that
\begin{align*}
    &C\int_0^t \sum_{k\ls 0}2^{k(s+1)}\norm{c_k(\tau)}_{L^2}d\tau
    +C\int_0^t \sum_{k\ls 0}2^{k(s+1)}\norm{\inc_k(\tau)}_{L^2}d\tau\\
    \lesssim &e^{CV(t)}\Big( \norm{(\h(0),c(0),\inc(0))}_{\hbe{s}
    \times(\be[s-1])^2}\\
    &\qquad\qquad+\int_0^t e^{-CV(\tau)}\norm{(F(\tau), G(\tau),P(\tau))}_{\hbe[s-1]{s}\times(\be[s-1])^2}d\tau\Big)
\end{align*}
Combining the last inequality with \eqref{swm63}, we complete the proof.
\end{proof}

\section{Existence and uniqueness}

This section is devoted to the proof of the Theorem~\ref{thm.main}.
The principle of the proof is a very classical one. We shall use the classical Friedrichs' regularization method, which was used in \cite{CheM01,CheZ07,CM08,Has08} for examples, to construct the
approximate solutions  $(\h^n,\vu^n)_{n\in\NN}$ to
\eqref{swm}, and then we will use Proposition~\ref{prop.conven} to get some uniform bounds on $(\h^n,\vu^n)_{n\in\NN}$.

\subsection{Construction of the approximate sequence} 

To this end, let us define the sequence of operators
$(J_n)_{n\in\NN}$ by
\begin{align*}
    J_n f:=\F^{-1}\mathbf{1}_{B(\frac{1}{n},n)}(\xi)\F f,
\end{align*}
and consider the following approximate system:
\begin{align}\label{swm6}
\left\{\begin{aligned}
    &\h_t^{n}+J_n(J_n\vu^n\cdot\nabla J_n\h^{n})+\hb\lam J_nc^{n}=F^n,\\
    &c_t^{n}+J_n(J_n\vu^n\cdot\nabla J_nc^{n})-4\mu\Delta J_nc^{n}-\f J_n\inc^{n}-\g\lam J_n\h^{n}
    -\beta\lam^3 J_n\h^{n}=G^n,\\
    &\inc_t^{n}-\mu\Delta J_n\inc^{n}+\f J_n c^{n}=J_n\lam^{-1}\divp H^n,\\
    &\vu^{n}=-\lam^{-1}\nabla c^{n}-\lam^{-1}\nabla^\perp\inc^{n},\\
    &(\h^{n},c^{n},\inc^{n})(0)=(\h_n,\lam^{-1}\dive\vu_n,
    \lam^{-1}\divp\vu_n),
    \end{aligned}\right.
\end{align}
where
\begin{align*}
    & \h_n=J_n(\h_0-\hb), \quad
    \vu_n=J_n\vu_0,\\
    &F^n=-J_n(J_n\h^n\dive J_n\vu^n),\\
    &G^n=J_n(J_n\vu^n\cdot\nabla J_nc^n)+J_n\lam^{-1}\dive H^n,\\
    &H^n=-J_n\vu^n\cdot\nabla J_n\vu^n+2\mu\frac{\nabla J_n\h^n D(J_n\vu^n)
    +\nabla J_n\h^n\dive J_n\vu^n}{ \zeta(J_n\h^n+\hb)},
\end{align*}
with $\zeta$ a smooth function satisfying
\begin{align*}
    \zeta(s)=\left\{\begin{array}{ll}
        \hb/4,&\abs{s}\ls\hb/4,\\
        s, &\hb/2\ls\abs{s}\ls 3\hb/2,\\
        7\hb/4, &\abs{s}\gs 7\hb/4.\\
        \text{smooth}, &\text{otherwise}.
    \end{array}\right.
\end{align*}
We want to show that \eqref{swm6} is only an ordinary differential equation in $L^2\times L^2\times L^2$. We can observe easily that all the source term in \eqref{swm6} turn out to be continuous in $L^2\times L^2\times L^2$. For example, we consider the term $J_n\lam^{-1}\dive \frac{\nabla J_n\h^n\dive J_n\vu^n}{\zeta(J_n\h^n+\hb)}$. By Plancherel's theorem, Hausdorff-Young's inequality and H\"older's inequality, we have
\begin{align*}
    &\norm{J_n\lam^{-1}\dive \frac{\nabla J_n\h^n\dive J_n\vu^n}{\zeta(J_n\h^n+\hb)}}_{L^2}
    =\norm{\mathbf{1}_{B(\frac{1}{n},n)}\abs{\xi}^{-1}(\xi_1,\xi_2)\cdot
    \F\frac{\nabla J_n\h^n\dive J_n\vu^n}{\zeta(J_n\h^n+\hb)}}_{L^2}\\
    \ls &\norm{\frac{\nabla J_n\h^n\dive J_n\vu^n}{\zeta(J_n\h^n+\hb)}}_{L^2}
    \ls \norm{\nabla J_n\h^n\dive J_n\vu^n}_{L^2}\norm{\frac{1}{\zeta(J_n\h^n+\hb)}}_{L^\infty}\\
    \ls &\frac{4}{\hb}\norm{\nabla J_n\h^n}_{L^\infty}\norm{\dive J_n\vu^n}_{L^2}
    \ls \frac{4n}{\hb}\norm{\abs{\xi}\mathbf{1}_{B(\frac{1}{n},n)}\F \h^n}_{L^1}\norm{\vu^n}_{L^2}\\
    \ls &\frac{4n^3}{\hb}\norm{\h^n}_{L^2}\norm{\vu^n}_{L^2}.
\end{align*}
Thus, the usual Cauchy-Lipschitz theorem implies the existence of a strictly positive maximal time $T_n$ such that a unique solution exists which is continuous in time with value in $L^2\times L^2\times L^2$. However, as $J_n^2=J_n$, we claim that $J_n(\h^n,c^n,\inc^n)$ is also a solution, so uniqueness implies that $J_n(\h^n,c^n,\inc^n)=(\h^n,c^n,\inc^n)$. So $(\h^n,c^n,\inc^n)$ is also a solution of the following system:
\begin{align}\label{swm8}
\left\{\begin{aligned}
    &\h_t^{n}+J_n(\vu^n\cdot\nabla \h^{n})+\hb\lam c^{n}=F_1^n,\\
    &c_t^{n}+J_n(\vu^n\cdot\nabla c^{n})-4\mu\Delta c^{n}-\f \inc^{n}-\g\lam \h^{n}
    -\beta\lam^3 \h^{n}=G_1^n,\\
    &\inc_t^{n}-\mu\Delta \inc^{n}+\f c^{n}=\lam^{-1}\divp H_1^n,\\
    &\vu^{n}=-\lam^{-1}\nabla c^{n}-\lam^{-1}\nabla^\perp\inc^{n},\\
    &(\h^{n},c^{n},\inc^{n})(0)=(\h_n,\lam^{-1}\dive\vu_n,
    \lam^{-1}\divp\vu_n),
    \end{aligned}\right.
\end{align}
with
\begin{align*}
    & \h_n=J_n(\h_0-\hb), \quad
    \vu_n=J_n\vu_0,\\
    &F_1^n=-J_n(\h^n\dive \vu^n),\\
    &G_1^n=J_n(\vu^n\cdot\nabla c^n)+J_n\lam^{-1}\dive H_1^n,\\
    &H_1^n=-\vu^n\cdot\nabla \vu^n+2\mu\frac{\nabla \h^n D(\vu^n)
    +\nabla \h^n\dive \vu^n}{ \zeta(\h^n+\hb)}.
\end{align*}
The system \eqref{swm8} appears to be an ordinary differential equation in the space
\begin{align*}
    L_n^2:=\set{a\in L^2(\Real^2): \supp\F a\subset B(\frac{1}{n},n)}.
\end{align*}
Due to the Cauchy-Lipschitz theorem again, a unique maximal solution exists on an interval $[0,T_n^*)$ which is continuous in time with value in $L_n^2\times L_n^2\times L_n^2$.

\subsection{Uniform bounds}
In this part, we prove uniform estimates independent of $T<T_n^*$ in $E_T^1\cap E_T^{1+\eps}$ for
$(\h^n,\vu^n)$. We shall show that $T_n^*=+\infty$ by the Cauchy-Lipschitz theorem.
Denote
\begin{align*}
    E(0):=&\norm{\h_0-\hb}_{\hbe[0]{1+\eps}}
    +\norm{\vu_0}_{\hbe[0]{\eps}},\\
    E(\h,\vu,t):=&\norm{(\h,\vu)}_{E_t^1}
    +\norm{(\h,\vu)}_{E_t^{1+\eps}},\\
    \te[T]_n:=&\sup\set{t\in[0,T_n^*): E(\h^n,\vu^n,t)\ls A\te[C]E(0)},
\end{align*}
where $\te[C]$ corresponds to the constant in Proposition~\ref{prop.conven} and $A>\max(2,\te[C]^{-1})$ is a constant. Thus, by the continuity we have $\te[T]_n>0$.

We are going to prove that $\te[T]_n=T_n^*$ for all $n\in\NN$ and we will conclude that $T_n^*=+\infty$ for any $n\in\NN$.

According to the Proposition~\ref{prop.conven}
and the definition of $(\h_n,\vu_n)$, the following inequality
holds
\begin{align*}
    \norm{(\h^{n},\vu^{n})}_{E_T^1}\ls
    &\te[C]e^{\te[C]\norm{\vu^n}_{L_T^1(\be[2])}}
    \Big(\norm{\h_0-\hb}_{\hbe[0]{1}}+\norm{\vu_0}_{\be[0]}\\
    &+\norm{F_1^n}_{L_T^1(\hbe[0]{1})}+\norm{\vu^n\cdot\nabla c^n}_{L_T^1(\be[0])}
    +\norm{H_1^n}_{L_T^1(\be[0])}\Big).
\end{align*}

Therefore, it is only a matter to prove appropriate estimates for
$F_1^n$, $H_1^n$ and $\vu^n\cdot\nabla c^n$. The estimate of $F_1^n$ is straightforward. From Lemma~\ref{lem.fgbesov}, we have
\begin{align}\label{uniest2}
    \norm{F_1^n}_{L_T^1(\hbe[0]{1})}\ls
    C\norm{\h^n}_{L_T^\infty(\hbe[0]{1})}
    \norm{\vu^n}_{L_T^1(\be[2])}
    \ls CE^2(\h^n,\vu^n,T).
\end{align}
With the help of Lemma~\ref{lem.fgbesov} and interpolation
arguments, we have
\begin{align}\label{uniest3}
\begin{aligned}
    \norm{\vu^n\cdot\nabla
    c^n}_{L_T^1(\be[0])}\ls&C\norm{\vu^n}_{L_T^2(\be[1])}\norm{\nabla
    c^n}_{L_T^2(\be[0])}\ls C\norm{\vu^n}_{L_T^2(\be[1])}^2\\
    \ls& C
    \norm{\vu^n}_{L_T^\infty(\be[0])}\norm{\vu^n}_{L_T^1(\be[2])}\\
    \ls & CE^2(\h^n,\vu^n,T).
    \end{aligned}
\end{align}
In the same way, we can get
\begin{align}\label{uniest4}
\norm{\vu^n\cdot\nabla \vu^n}_{L_T^1(\be[0])}\ls CE^2(\h^n,\vu^n,T).
\end{align}
To estimate other terms of $H_1^n$, we make the following assumption
on $E(0)$:
\begin{align*}
    2C_1A\te[C]E(0)\ls \hb,
\end{align*}
where $C_1$ is the continuity modulus of $\be[1]\subset L^\infty$.
If $T< \te[T]_n$, it implies
\begin{align*}
    \norm{\h^n}_{L^\infty}\ls C_1\norm{\h^n}_{\be[1]}\ls
    C_1\norm{\h^n}_{\hbe[0]{1}}\ls C_1 A\te[C]E(0)\ls\frac{1}{2}\hb.
\end{align*}
Thus,  we have
\begin{align*}
    \norm{\h^n}_{L^\infty([0,T]\times\Real^2)}\ls \frac{1}{2}\hb,
\end{align*}
which yields
\begin{align*}
    \h^n+\hb\in[\frac{1}{2}\hb,\frac{3}{2}\hb] \text{ and }
    \zeta(\h^n+\hb)=\h^n+\hb.
\end{align*}

 From Lemma~\ref{lem.fgbesov} and \ref{lem.comp}, and interpolation arguments, we have
\begin{align}\label{uniest5}
    \begin{aligned}
    &\hb\norm{\frac{\nabla\h^n\cdot\nabla
    \vu^n}{\hb+\h^n}}_{L_T^1(\be[0])}\\
    \ls& \norm{\nabla\h^n\cdot\nabla \vu^n}_{L_T^1(\be[0])}
    +\norm{\frac{\h^n\nabla\h^n\cdot\nabla
    \vu^n}{\hb+\h^n}}_{L_T^1(\be[0])}\\
    \ls &C
    \norm{\nabla\h^n}_{L_T^\infty(\be[0])}
    \norm{\nabla\vu^n}_{L_T^1(\be[1])}
    +C\norm{\frac{\h^n\nabla\h^n}{\hb+\h^n}}_{L_T^\infty(\be[0])}
    \norm{\nabla\vu^n}_{L_T^1(\be[1])}\\
    \ls
    &C\norm{\h^n}_{L_T^\infty(\hbe[0]{1})}\norm{\vu^n}_{L_T^1(\be[2])}
    \Big(1+\norm{\frac{\h^n}{\hb+\h^n}}_{L_T^\infty(\be[1])}\Big)\\
    \ls
    &C\norm{\h^n}_{L_T^\infty(\hbe[0]{1})}\norm{\vu^n}_{L_T^1(\be[2])}
    \Big(1+\norm{\h^n}_{L_T^\infty(\be[1])}\Big)\\
    \ls& CE^2(\h^n,\vu^n,T)(1+E(\h^n,\vu^n,T)).
    \end{aligned}
\end{align}
Similarly, we can get
\begin{align}\label{uniest51}
    \hb\norm{\frac{\nabla\h^n\cdot D(\vu^n)}{\hb+\h^n}}_{L_T^1(\be[0])} \ls CE^2(\h^n,\vu^n,T)(1+E(\h^n,\vu^n,T)).
\end{align}
Hence, from \eqref{uniest3}-\eqref{uniest5}, we gather
\begin{align}\label{uniest6}
\begin{aligned}
   & \norm{\vu^n\cdot\nabla
    c^n}_{L_T^1(\be[0])}+\norm{H_1^n}_{L_T^1(\be[0])}\\
    \ls &
    C(1+4\mu\hb^{-1}(1+E(\h^n,\vu^n,T)))E^2(\h^n,\vu^n,T).
\end{aligned}
\end{align}

Similarly, according to the Proposition~\ref{prop.conven}
and the definition of $(\h_n,\vu_n)$, the following inequality
holds
\begin{align*}
    \norm{(\h^{n},\vu^{n})}_{E_T^{1+\eps}}\ls
    &\te[C]e^{\te[C]\norm{\vu^n}_{L_T^1(\be[2])}}
    \Big(\norm{\h_0-\hb}_{\hbe[\eps]{1+\eps}}+\norm{\vu_0}_{\be[\eps]}\\
    &+\norm{F_1^n}_{L_T^1(\hbe[\eps]{1+\eps})}+\norm{\vu^n\cdot\nabla c^n}_{L_T^1(\be[\eps])}
    +\norm{H_1^n}_{L_T^1(\be[\eps])}\Big).
\end{align*}

The estimate of $F_1^n$
is straightforward. From Lemma~\ref{lem.fgbesov}, we have
\begin{align}\label{uniest8}
    \norm{F_1^n}_{L_T^1(\hbe[\eps]{1+\eps})}\ls
    C\norm{\h^n}_{L_T^\infty(\hbe[\eps]{1+\eps})}
    \norm{\vu^n}_{L_T^1(\be[2])}
    \ls CE^2(\h^n,\vu^n,T).
\end{align}
With the help of Lemma~\ref{lem.fgbesov} and interpolation
arguments, we have
\begin{align}\label{uniest9}
\begin{aligned}
    \norm{\vu^n\cdot\nabla
    c^n}_{L_T^1(\be[\eps])}
    \ls&C\norm{\vu^n}_{L_T^{2+\eps}(\be[1])}\norm{\nabla
    c^n}_{L_T^{\frac{2+\eps}{1+\eps}}(\be[\eps])}
    \\
    \ls& C
    \norm{\vu^n}_{L_T^\infty(\be[0])}\norm{\vu^n}_{L_T^1(\be[2+\eps])}\\
    \ls & CE^2(\h^n,\vu^n,T).
    \end{aligned}
\end{align}
In the same way, we can get
\begin{align}\label{uniest10}
\norm{\vu^n\cdot\nabla \vu^n}_{L_T^1(\be[\eps])}\ls CE^2(\h^n,\vu^n,T).
\end{align}

From Lemma~\ref{lem.fgbesov}, Lemma~\ref{lem.comp} and interpolation arguments, we have
\begin{align}\label{uniest11}
    \begin{aligned}
    &\hb\norm{\frac{\nabla\h^n\cdot\nabla
    \vu^n}{\hb+\h^n}}_{L_T^1(\be[\eps])}\\
    \ls& \norm{\nabla\h^n\cdot\nabla \vu^n}_{L_T^1(\be[\eps])}
    +\norm{\frac{\h^n\nabla\h^n\cdot\nabla
    \vu^n}{\hb+\h^n}}_{L_T^1(\be[\eps])}\\
    \ls &C
    \norm{\nabla\h^n}_{L_T^\infty(\be[\eps])}
    \norm{\nabla\vu^n}_{L_T^1(\be[1])}
    +C\norm{\frac{\h^n\nabla\h^n}{\hb+\h^n}}_{L_T^\infty(\be[\eps])}
    \norm{\nabla\vu^n}_{L_T^1(\be[1])}\\
    \ls
    &C\norm{\h^n}_{L_T^\infty(\hbe[0]{1+\eps})}
    \norm{\vu^n}_{L_T^1(\be[2])}
    \Big(1+\norm{\frac{\h^n}{\hb+\h^n}}_{L_T^\infty(\be[1])}\Big)\\
    \ls
    &C\norm{\h^n}_{L_T^\infty(\hbe[0]{1+\eps})}
    \norm{\vu^n}_{L_T^1(\be[2])}\Big(1+\norm{\h^n}_{L_T^\infty(\be[1])}\Big)\\
    \ls& CE^2(\h^n,\vu^n,T)(1+E(\h^n,\vu^n,T)).
    \end{aligned}
\end{align}
Similarly, we can get
\begin{align}\label{uniest12}
    \hb\norm{\frac{\nabla\h^n\cdot D(\vu^n)}{\hb+\h^n}}_{L^1(\be[\eps])} \ls CE^2(\h^n,\vu^n,T)(1+E(\h^n,\vu^n,T)).
\end{align}
Hence, from \eqref{uniest9}-\eqref{uniest12}, we gather
\begin{align}\label{uniest13}
\begin{aligned}
   & \norm{\vu^n\cdot\nabla
    c^n}_{L_T^1(\be[\eps])}+\norm{H_1^n}_{L_T^1(\be[\eps])}\\
    \ls &
    C(1+4\mu\hb^{-1}(1+E(\h^n,\vu^n,T)))E^2(\h^n,\vu^n,T).
\end{aligned}
\end{align}

From \eqref{uniest2}, \eqref{uniest6}, \eqref{uniest8} and \eqref{uniest13}, it follows
\begin{align*}
    \norm{(\h^{n},\vu^{n})}_{E_T^1\cap E_T^{1+\eps}}\ls \te[C]e^{A\te[C]^2E(0)}[
    1+CA^2\te[C]^2(1+4\mu\hb^{-1}(1+A\te[C]E(0)))E(0)]E(0).
\end{align*}

So we can choose $E(0)$ so small that
\begin{align}\label{uniest7}
\begin{aligned}
    &1+CA^2\te[C]^2(1+4\mu\hb^{-1}(1+A\te[C]E(0)))E(0)\ls \frac{A^2}{A+2},\\
    & e^{A\te[C]^2E(0)}\ls \frac{A+1}{A}\; \text{
    and }\;
    2C_1A\te[C]E(0)\ls \hb,
\end{aligned}
\end{align}
which yields $\norm{(\h^{n},\vu^{n})}_{E_T^1}\ls \frac{A+1}{A+2}A\te[C]E(0)$ for any $T<\te[T]_n$. It follows that $\te[T]_n=T_n^*$. In fact, if $\te[T]_n<T_n^*$, we have seen that $E(\h^n,\vu^n,\te[T]_n)\ls \frac{A+1}{A+2}A\te[C]E(0)$. So by continuity, for a sufficiently small constant $\sigma>0$ we can obtain $E(\h^n,\vu^n,\te[T]_n+\sigma)\ls A\te[C]E(0)$. This yields a contradiction with the definition of $\te[T]_n$.

Now, if $\te[T]_n=T_n^*<\infty$, then we have obtained $F(\h^n,\vu^n,T_n^*)\ls A\te[C]E(0)$. As $\norm{\h^n}_{L_{T_n^*}(\hbe[0]{1+\eps})}<\infty$ and $\norm{\vu^n}_{L_{T_n^*}(\hbe[0]{\eps})}<\infty$, it implies that $\norm{\h^n}_{L_{T_n^*}(L_n^2)}<\infty$ and $\norm{\vu^n}_{L_{T_n^*}(L_n^2)}<\infty$. Thus, we may continue the solution beyond $T_n^*$ by the Cauchy-Lipschitz theorem. This contradicts the definition od $T_n^*$. Therefore, the approximate solution $(\h^n,\vu^n)_{n\in\NN}$ is global in time.

\subsection{Existence of a solution}

In this part, we shall show that, up to an extraction, the sequence
$(\h^n,\vu^n)_{n\in\NN}$ converges in
$\mathscr{D}'(\Real^+\times\Real^2)$ to a solution $(\h,\vu)$ of
\eqref{swm} which has the desired regularity properties. The proof
lies on compactness arguments. To start with, we show that the time
first derivative of $(\h^n,\vu^n)$ is uniformly bounded in
appropriate spaces. This enables us to apply Ascoli's theorem and
get the existence of a limit $(\h,\vu)$ for a subsequence. Now, the
uniform bounds of the previous part provides us with additional
regularity and convergence properties so that we may pass to the
limit in the system.

It is convenient to split $(\h^n,\vu^n)$ into the solution of a
linear system with initial data $(\h_n,\vu_n)$ and the discrepancy
to that solution. More precisely, we denote by $(\h_L^n,\vu_L^n)$
the solution to the linear system
\begin{align}\label{linear}
\left\{\begin{aligned}
    &\partial_t\h_L^n+\dive\vu_L^n=0,\\
    &\partial_t\vu_L^n-\mu\Delta\vu_L^n-3\mu\nabla\dive\vu_L^n+\f (\vu_L^n)^\perp+\g\nabla\h_L^n
    -\beta\nabla\Delta\h_L^n=0,\\
    &(\h_L^n,\vu_L^n)_{t=0}=(\h_n,\vu_n),
    \end{aligned}\right.
\end{align}
and $(\bar{\h}^n,\bar{\vu}^n)=(\h^n-\h_L^n,\vu^n-\vu_L^n)$.

Obviously, the definition of $(\h_n,\vu_n)$ entails
\begin{align*}
    \h_n\to \h_0-\hb \text{ in } \hbe[0]{1+\eps}, \quad
    \vu_n\to\vu_0 \text{ in } \hbe[0]{\eps}, \quad \text{as } n\to +\infty.
\end{align*}
The Proposition~\ref{prop.conven} insures us
that
\begin{align}\label{linear1}
    (\h_L^n,\vu_L^n) \to (\h_L,\vu_L) \text{ in } E^1\cap E^{1+\eps},
\end{align}
where $(\h_L,\vu_L)$ is the solution of the linear system
\begin{align}\label{linear2}
\left\{\begin{aligned}
    &\partial_t\h_L+\dive\vu_L=0,\\
    &\partial_t\vu_L-\mu\Delta\vu_L-3\mu\nabla\dive\vu_L+\f\vu_L^\perp
    +\g\nabla\h_L-\beta\nabla\Delta\h_L=0,\\
    &(\h_L,\vu_L)_{t=0}=(\h_0-\hb,\vu_0).
    \end{aligned}\right.
\end{align}
Now, we have to prove the convergence of $(\bar{\h}^n,\bar{\vu}^n)$.
This is of course a trifle more difficult and requires compactness
results. Let us first state the following lemma.

\begin{lemma}\label{lem.ubt}
$((\bar{\h}^n,\bar{\vu}^n))_{n\in\NN}$ is uniformly bounded in  $\C^{\frac{1}{2}}(\Real^+; \be[0]) \times
(\C^{\frac{\eps}{3+\eps}}(\Real^+; \be[0]))^2$.
\end{lemma}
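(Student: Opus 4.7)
The plan is to exploit that $(\bar h^n,\bar{\vu}^n)$ vanishes at $t=0$ and satisfies the same nonlinear parabolic–Korteweg system as $(\h^n-\hb,\vu^n)$, with the linear part absorbed into $(\h_L^n,\vu_L^n)$; so $\partial_t\bar h^n$ and $\partial_t\bar{\vu}^n$ consist of the nonlinear terms $F_1^n$, $\vu^n\cdot\nabla\vu^n$ and $H_1^n$ already estimated in Section 4.2, plus linear pieces in $\bar h^n,\bar{\vu}^n$ controlled by the uniform $E^1\cap E^{1+\eps}$ bounds. Hölder continuity in time will then follow from integrability in time of $\partial_t$ together with interpolation in the Besov indices.

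For $\bar h^n$, I would estimate each contribution to $\partial_t\bar h^n$ in $L^2(\Real^+;\be[0])$: the bilinear term $\dive(\h^n\vu^n)$ is handled by \lemref{lem.fgbesov} together with $\vu^n\in L^2(\be[1])$ (obtained by interpolating $\te[L]^\infty(\be[0])$ and $L^1(\be[2])$), and the remaining linear pieces are direct. Hölder's inequality then gives
\begin{align*}
\norm{\bar h^n(t)-\bar h^n(s)}_{\be[0]}\ls\int_s^t\norm{\partial_\tau\bar h^n}_{\be[0]}\,d\tau\ls|t-s|^{1/2}\norm{\partial_t\bar h^n}_{L^2(\be[0])},
\end{align*}
yielding the $\C^{1/2}(\Real^+;\be[0])$-bound uniformly in $n$.

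The estimate for $\bar{\vu}^n$ is more delicate: the third-order term $\beta\nabla\Delta\bar h^n$ cannot be placed in any $L^p(\be[0])$ with $p>1$ at high frequencies, since $\bar h^n$ is only uniformly in $\te[L]^\infty(\hbe[0]{1+\eps})\cap L^1(\hbe[3+\eps]{2+\eps})$. My plan is to split the $\be[0]$-norm dyadically. At low frequencies the interpolation of $\bar h^n\in L^\infty(\be[0])\cap L^1(\be[3+\eps])$ yields $\nabla\Delta\bar h^n\in L^{(3+\eps)/3}(\be[0])$; analogous interpolations, together with \lemref{lem.fgbesov} and \lemref{lem.comp} for the quadratic nonlinearities, place all low-frequency contributions to $\partial_t\bar{\vu}^n$ in $L^{(3+\eps)/3}(\be[0])$, whence a low-frequency Hölder quotient bounded by $|t-s|^{\eps/(3+\eps)}$. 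At high frequencies I would apply the Besov interpolation
\begin{align*}
\sum_{k>0}\norm{\dk f}_{L^2}\ls\Big(\sum_{k>0}2^{k\eps}\norm{\dk f}_{L^2}\Big)^{3/(3+\eps)}\Big(\sum_{k>0}2^{-3k}\norm{\dk f}_{L^2}\Big)^{\eps/(3+\eps)}
\end{align*}
to $f=\bar{\vu}^n(t)-\bar{\vu}^n(s)$: the first factor is uniformly bounded thanks to $\bar{\vu}^n\in\te[L]^\infty(\be[\eps])$, and the second is at most $|t-s|$ times a uniform $\sup_\tau$-bound, because each term of $\partial_\tau\bar{\vu}^n$—the worst being $\Delta\bar{\vu}^n$ and $\nabla\Delta\bar h^n$, both lying in $\be[\eps-2]$—embeds at high frequencies into $\be[-3]$ via $\sum_{k>0}2^{-3k}\norm{\dk\cdot}_{L^2}\ls\sum_{k>0}2^{k\sigma}\norm{\dk\cdot}_{L^2}$ for any $\sigma\gs-3$. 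Summing the two contributions delivers the claimed $\C^{\eps/(3+\eps)}(\Real^+;\be[0])$-bound. The main difficulty is verifying this uniform negative-regularity bound at high frequencies for the quadratic nonlinearities in $H_1^n$, which requires combining \lemref{lem.fgbesov}, \lemref{lem.comp}, and the $E^1\cap E^{1+\eps}$ bounds from the previous subsection.
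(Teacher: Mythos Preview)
Your treatment of $\bar h^n$ coincides with the paper's: both show $\partial_t\bar h^n\in L^2(\Real^+;\be[0])$ via the bilinear estimates and $\vu^n\in\te[L]^2(\be[1])$, and then conclude $\C^{1/2}$ from H\"older in time.

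For $\bar{\vu}^n$ the paper takes a more direct route and avoids your frequency-splitting interpolation altogether. It works with $\bar c^n=\lam^{-1}\dive\bar{\vu}^n$ and $\bar d^n=\lam^{-1}\divp\bar{\vu}^n$ and shows that every term of $\partial_t\bar c^n$, $\partial_t\bar d^n$ lies in a sum $(L^{p}+L^{q})(\Real^+;\be[0])$ with $p,q>1$. The third-order piece $\beta\lam^3(\h^n-\h_L^n)$, which you flag as the obstacle, is handled by interpolating $\h^n$ and $\h_L^n$ into $L^{r}(\be[3])$ with $r>1$ using the hybrid structure of $E^1\cap E^{1+\eps}$: low frequencies from $\te[L]^\infty(\be[0])\cap L^1(\be[4])$ give $L^{4/3}(\be[3])$, and high frequencies from $\te[L]^\infty(\be[1+\eps])\cap L^1(\be[3+\eps])$ give $L^{r}(\be[3])$ with $r>1$. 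Thus $\lam^3\h^n$ sits directly in a sum of $L^{r}(\be[0])$ spaces, and the worst time exponent produces the H\"older index $\eps/(3+\eps)$.

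Your high-frequency scheme has a genuine gap. You need a uniform $\sup_\tau$ bound on $\sum_{k>0}2^{-3k}\norm{\dk\partial_\tau\bar{\vu}^n}_{L^2}$, and while the linear pieces $\Delta\bar{\vu}^n$, $\nabla\Delta\bar h^n\in L^\infty(\be[\eps-2])$ are fine, the quadratic nonlinearities are not. For $\vu^n\cdot\nabla\vu^n$ with only $\vu^n\in L^\infty(\be[0]\cap\be[\eps])$, \lemref{lem.fgbesov} puts $\be[\eps]\cdot\be[\eps-1]$ into $\be[2\eps-2]$ only when $2\eps-1>0$; for $\eps\le\tfrac12$ the product rule fails, and no evident $L^\infty_\tau$ control in any negative-index norm is available from the $E^1\cap E^{1+\eps}$ bounds. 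The same obstruction hits the $H_1^n$ terms. The paper sidesteps this by never demanding $L^\infty_\tau$ on the nonlinear terms: those already lie in $L^{(2+\eps)/2}(\be[0])$, which is more than enough. If you want to rescue your argument, split off the nonlinear contributions and treat them in $L^{p}(\be[0])$ directly, reserving the Besov interpolation trick for the linear third-order term only; but at that point you have essentially reproduced the paper's proof.
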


\begin{proof}
Throughout the proof, we will note u.b. for uniformly bounded. We
first prove that $\partial_t \bar{\h}^n$ is u.b. in
$L^2(\Real^+,\be[0])$, which yields the desired result for
$\bar{\h}$. Let us observe that $\bar{\h}^n$ verifies the following
equation
\begin{align*}
    \partial_t\bar{\h}^{n}=-J_n(\h^n\dive\vu^n)
    -J_n(\vu^n\cdot\nabla\h^{n})-\hb\dive\vu^{n}+\hb\dive\vu_L^{n}.
\end{align*}
According to the previous part, $(\h^n)_{n\in\NN}$ is u.b. in
$\te[L]^\infty(\be[1])$ and $(\vu^n)_{n\in\NN}$ is u.b. in $\te[L]^2(\be[1])$
in view of interpolation arguments. Thus, $-J_n(\h^n\dive\vu^n)
-J_n(\vu^n\cdot\nabla\h^{n})-\dive\vu^{n}$ is u.b. in $\te[L]^2(\be[0])$.
The definition of $\vu_L^n$ obviously provides us with uniform
bounds for $\dive\vu_L^n$ in $\te[L]^2(\be[0])$, so we can conclude that
$\partial_t\bar{\h}^n$ is u.b. in $L^2(\be[0])$.

Denote $c_L^n=\lam^{-1}\dive\vu_L^n$, $\bar{c}^n=
\lam^{-1}\dive\bar{\vu}^n$, $\inc_L^n= \lam^{-1}\dive^\perp\vu_L^n$ and
$\bar{\inc}^n= \lam^{-1}\dive^\perp\bar{\vu}^n$. Let us prove now that
$\partial_t\bar{c}^n$ is u.b. in $(L^{\frac{3+\eps}{3}}+L^\infty)(\be[0])$ and that $\partial_t\bar{\inc}^n$ is u.b.
in $(L^{\frac{2+\eps}{2}}+L^\infty)(\be[0])$ which give the required
result for $\bar{\vu}^n$ by using the relation
$\vu^n=-\lam^{-1}\nabla c^n-\lam^{-1}\nabla^\perp\inc^n$.

Let us recall that
\begin{align*}
    \partial_t\bar{c}^{n}=&
    4\mu\Delta(c^{n}-c_L^{n})+\f(\inc^{n}-\inc_L^{n})
    +\g\lam(\h^{n}-\h_L^{n})+\beta\lam^3(\h^{n}-\h_L^{n})\\
    &-J_n\lam^{-1}\dive\left(\vu^n\cdot\nabla\vu^n-2\mu\frac{\nabla\h^n D(\vu^n) +\nabla\h^n\dive\vu^n}{\h^n+\hb}\right),\\
    \partial_t\bar{\inc}^{n}=&\mu\Delta(\inc^{n}-\inc_L^{n})
    -\f(c^{n}-c_L^{n})\\
    &-J_n\lam^{-1}\divp\left(\vu^n\cdot\nabla\vu^n-2\mu\frac{\nabla\h^n D(\vu^n)
    +\nabla\h^n\dive\vu^n}{\h^n+\hb}\right).
\end{align*}

Results of the previous part and an interpolation argument yield uniform bounds for $\vu^n$ in $\te[L]^{\frac{2+\eps}{\eps}}(\be[\eps])\cap \te[L]^{\frac{2+\eps}{2-\eps}}
(\be[2-\eps])$. Since $\h^n$ is u.b. in $\te[L]^\infty(\be[1])$,
$c_L^n$ and $c^n$ are u.b. in $\te[L]^{\frac{2+\eps}{2}} (\be[2])$, we easily
verify that $\Delta(c^{n}-c_L^{n})$ and
$J_n\lam^{-1}\dive\Big(\vu^n\cdot\nabla\vu^n-2\mu\frac{\nabla\h^n D(\vu^n) +\nabla\h^n\dive\vu^n}{\h^n+\hb}\Big)$ are u.b. in
$L^{\frac{2+\eps}{2}}(\be[0])$. Obviously, we have $\inc^{n+1}$ and $\inc_L^{n+1}$ u.b. in $\te[L]^\infty(\be[0])$. Because $\h^n$ and $\h_L^n$ are u.b. in $\te[L]^\infty(\be[1])$, we have $\lam(\h^{n+1}-\h_L^{n+1})$ u.b. in $L^\infty(\be[0])$.
We also have $\h^n$ and $\h_L^n$ are u.b. in
$(\te[L]^{\frac{4}{3}}+\te[L]^{\frac{3+\eps}{3}})(\be[3])$ in view of Lemma~\ref{lem.inter}. Thus, $\lam^3\h^n$ is u.b. in
$(L^{\frac{4}{3}}+L^{\frac{3+\eps}{3}})(\be[0])$.  So we finally get $\partial_t\bar{c}^n$
u.b. in $(L^{\frac{3+\eps}{3}}+L^\infty)(\be[0])$. The case of $\partial_t\bar{\inc}^n$ goes
along the same lines. As the terms corresponding to
$\lam^3(\h^n-\h_L^n)$ do not appear, we simply get
$\partial_t\bar{\inc}^n$ u.b. in
$(L^{\frac{2+\eps}{2}}+L^\infty)(\be[0])$.
\end{proof}

Now, we can turn to the proof of the existence of a solution and use
Ascoli theorem to get strong convergence. We need to localize the
spatial space because we have some results of compactness for the
local Sobolev spaces. Let $(\chi_p)_{p\in\NN}$ be a sequence of
$\C_0^\infty(\Real^2)$ cut-off functions supported in the ball
$B(0,p+1)$ of $\Real^2$ and equal to $1$ in a neighborhood of
$B(0,p)$.

For any $p\in\NN$, Lemma~\ref{lem.ubt} tells us that
$((\chi_p\bar{\h}^n, \chi_p\bar{\vu}^n))_{n\in\NN}$ is uniformly
equicontinuous in $\C(\Real^+;(\be[0])^{1+2})$ and bounded in $E^{1+\eps}$.

Let us observe that the application $f\mapsto \chi_p f$ is compact from $\hbe[0]{1}$ into $\be[0]$, and from
$\be[\eps]$ into $\hbe[\eps]{0}$. After we apply Ascoli's theorem to the
family $((\chi_p\bar{\h}^n, \chi_p\bar{\vu}^n))_{n\in\NN}$ on the
time interval $[0,p]$, we use Cantor's diagonal process. This
finally provides us with a distribution $(\bar{\h},\bar{\vu})$
belonging to $\C(\Real^+; \be[0]\times
(\hbe[\eps]{0})^2)$ and a subsequence (which we still denote
by $((\bar{\h}^n,\bar{\vu}^n)_{n\in\NN})$ such that, for all
$p\in\NN$, we have
\begin{align*}
    (\chi_p\bar{\h}^n,\chi_p\bar{\vu}^n)\to
    (\chi_p\bar{\h},\chi_p\bar{\vu}) \text{ as } n\to+\infty,
\end{align*}
 in  $\C([0,p]; \be[0]\times (\hbe[\eps]{0})^2)$. This obviously infers that $(\bar{\h}^n,\bar{\vu}^n)$ tends to
$(\bar{\h},\bar{\vu})$ in $\mathscr{D}'(\Real^+\times \Real^2)$.

Coming back to the uniform estimates of the previous part, we
moreover get that $(\bar{\h},\bar{\vu})$ belongs to
\begin{align}\label{linear3}
    \te[L]^\infty\left(\Real^+;\hbe[0]{1+\eps}\times(\hbe[0]{\eps})^2\right)
    \cap L^1\left(\Real^+; (\hbe[4]{3}\cap\hbe[4+\eps]{3+\eps})\times
    (\hbe[2]{2+\eps})^2\right)
\end{align}
and to $\C^{1/2}(\Real^+;\be[0])\times
(\C^{\frac{\eps}{3+\eps}}(\Real^+;\be[0]))^2$.

Let us now prove that $(\h,\vu):=(\h_L,\vu_L)+(\bar{\h},\bar{\vu})$
solves \eqref{swm}. We first observe that, according to
\eqref{swm6},
\begin{align*}
    \left\{\begin{aligned}
      &\h_t^{n}+J_n(\vu^n\cdot\nabla\h^{n})+\hb\lam c^{n}=-J_n(\h^n\dive\vu^n),\\
      &\vu_t^{n}+J_n(\vu^n\cdot\nabla\vu^n)-\mu\Delta\vu^{n}-3\mu\nabla\dive\vu^{n}
      +\f(\vu^{n})^\perp+\g\nabla\h^{n}-\beta\nabla\Delta\h^{n}\\
      &=2\mu J_n\frac{\nabla\h^n D(\vu^n)
    +\nabla\h^n\dive\vu^n}{\h^n+\hb}.
    \end{aligned}\right.
\end{align*}
The only problem is to pass to the limit in
$\mathscr{D}'(\Real^+\times\Real^2)$ in the nonlinear terms. This
can be done by using the convergence results stemming from the
uniform estimates and the convergence results \eqref{linear1} and
\eqref{linear3}.

As it is just a matter of doing tedious verifications, we show, as
an example, the case of the term
$\frac{\nabla\h^n\dive\vu^n}{\hb+\h^n}$. Denote $L(z)=z/(z+\hb)$.
Let $\theta\in\C_0^\infty(\Real^+\times\Real^2)$ and $p\in\NN$ be
such that $\supp\theta\subset [0,p]\times B(0,p)$. We consider the
decomposition
\begin{align*}
    &J_n\frac{\hb\theta\nabla\h^n\dive\vu^n}{\hb+\h^n}
    -\frac{\hb\theta\nabla\h\dive\vu}{\hb+\h}\\
    =&J_n[\theta(1-L(\h^n))\chi_p\nabla\h^n\chi_p
    \dive(\vu_L^n-\vu_L)+\theta(1-L(\h^n))
    \chi_p\nabla\h^n\chi_p\dive(\chi_p(\bar{\vu}^n-\bar{\vu}))\\
    & +\theta(1-L(\h^n))\chi_p\nabla
    (\chi_p(\h^n-\h))\dive\vu
    +\theta\nabla\h\chi_p\dive\vu(L(\chi_p\h)-L(\chi_p\h^n))]\\
    &+(J_n-I)\frac{\hb\theta\nabla\h\dive\vu}{\hb+\h}.
\end{align*}
The last term tends to zero as $n\to +\infty$ due to the property of $J_n$. As $\theta L(\h^n)$ and $\h^n$ are u.b. in $L^\infty(\be[1])$ and
$\vu_L^n$ tends to $\vu_L$ in $L^1(\be[2])$, the first term tends to
$0$ in $L^1(\be[0])$. According to \eqref{linear3},
$\chi_p(\bar{\vu}^n-\bar{\vu})$ tends to zero in
$L^1([0,p];\be[2])$ so that the second term tends to $0$ in
$L^1([0,p];\be[0])$. Clearly, $\chi_p\h^n\to\chi_p\h$ in
$L^\infty(\be[1])$ and $L(\chi_p\h^n)\to L(\chi_p\h)$ in
$L^\infty(L^\infty\cap\be[1])$, so that the third and the last
terms also tend to $0$ in $L^1(\be[0])$. The other nonlinear terms can
be treated in the same way.

We still have to prove that $\h$ is continuous in $\hbe[0]{1+\eps}$
and that $\vu$ belongs to $\C(\Real^+;(\hbe[0]{\eps})^2)$. The continuity of
$\vu$ is straightforward. Indeed, $\vu$ satisfies
\begin{align*}
    \partial_t\vu=&-\vu\cdot\nabla\vu+\mu\Delta\vu+3\mu\nabla\dive\vu
    -\f\vu^\perp-\g\nabla\h+\beta\nabla\Delta\h\\
    &+2\mu\frac{\nabla\h D(\vu)+\nabla\h\dive\vu}{\h+\hb}
\end{align*}
and the r.h.s. belongs to $(L^1+L^\infty)(\be[0])$ by noting that we also have
$\h\in L^\infty(\be[1])\cap (L^{\frac{4}{3}}+L^1)(\be[3])$ in view of the interpolation
argument.  In a similar argument, one can obtain $\vu\in \C(\Real^+;(\be[\eps])^2)$. We have already got that $\h\in\C(\Real^+;\be[0])$.
Indeed, $\h_0-\hb\in \be[0]$, $\vu\in L^2(\Real^+;\be[1])$, $\h\in
L^\infty(\Real^+;\be[1])$ and then $\partial_t\h\in
L^2(\Real^+;\be[0])$ from the equation
$\partial_t\h=-\hb\dive\vu-\dive(\h\vu)$. Thus, there remains to prove
the continuity of $\h$ in $\be[1+\eps]$.

Let us apply the operator $\dk$ to the first equation of \eqref{swm}
to get
\begin{align}\label{linear4}
    \partial_t\dk\h=-\dk(\vu\cdot\nabla\h)-\hb\dk\dive\vu
    -\dk(\h\dive\vu).
\end{align}
Obviously, for fixed $k$ the r.h.s belongs to $L_{loc}^1(\Real^+;
L^2)$ so that each $\dk\h$ is continuous in time with values in
$L^2$.

Now, we apply an energy method to \eqref{linear4} to obtain, with
the help of Lemma~6.2 in \cite{Dan01}, that
\begin{align*}
    \frac{1}{2}\frac{d}{dt}\norm{\dk\h}_{L^2}^2\ls
    C\norm{\dk\h}_{L^2}\Big(\alpha_k2^{-k(1+\eps)}\norm{\h}_{\be[1+\eps]}
    \norm{\vu}_{\be[2]}\\+\norm{\dk\dive\vu}_{L^2}
    +\norm{\dk(\h\dive\vu)}_{L^2}\Big),
\end{align*}
where $\sum_k\alpha_k\ls 1$. Integrating in time and multiplying
$2^{k(1+\eps)}$, we get
\begin{align*}
    &2^{k(1+\eps)}\norm{\dk\h(t)}_{L^2}\\
    \ls& 2^{k(1+\eps)}\norm{\dk(\h_0-\hb)}_{L^2}
    +C\int_0^t
    \Big(\alpha_k\norm{\h(\tau)}_{\be[1+\eps]}\norm{\vu(\tau)}_{\be[2]}\\
    &+2^{k(2+\eps)}\norm{\dk\vu(\tau)}_{L^2}
    +2^{k(1+\eps)}\norm{\dk(\h\dive\vu)(\tau)}_{L^2}\Big)d\tau.
\end{align*}
Since $\h\in L^\infty(\be[1+\eps])$, $\vu\in L^1(\hbe[2]{2+\eps})$ and
$\h\dive\vu\in L^1(\be[1+\eps])$, we can get
\begin{align*}
    \sum_{k\in\Z} \sup_{t\gs 0} 2^{k(1+\eps)}\norm{\dk\h(t)}_{L^2}
    \lesssim&
    \norm{\h_0-\hb}_{\be[1+\eps]}
    +\left(1+\norm{\h}_{L^\infty(\be[1+\eps])}\right)
    \norm{\vu}_{L^1(\hbe[2]{2+\eps})}\\
    &+\norm{\h\dive\vu}_{L^1(\be[1+\eps])}<\infty.
\end{align*}
Thus, $\sum_{\abs{k}\ls N}\dk\h$ converges uniformly in
$L^\infty(\Real^+;\be[1+\eps])$ and we can conclude that $\h\in
\C(\Real^+;\be[1+\eps])$.

\subsection{Uniqueness}

Let $(\h_1,\vu_1)$ and $(\h_2,\vu_2)$ be solutions of \eqref{swm} in
$E_T^1\cap E_T^{1+\eps}$ with the same data $(\h_0-\hb,\vu_0)$ constructed in the
previous parts on the time interval $[0,T]$. Denote
$(\delta\h,\delta\vu)=(\h_2-\h_1,\vu_2-\vu_1)$. From \eqref{swm}, we can get
\begin{align}\label{uni}
    \left\{\begin{aligned}
     &\partial_t\delta\h+\vu_2\cdot\nabla\delta\h+\hb\dive\delta\vu=F_2,\\
     &\partial_t\delta\vu+\vu_2\cdot\nabla\delta\vu
     -\mu\Delta\delta\vu-3\mu\nabla\dive\delta\vu+\f(\delta\vu)^\perp
     +\g\nabla\delta\h-\beta\nabla\Delta\delta\h=G_2,\\
     &(\delta\h,\delta\vu)=(0,\mathbf{0}),
    \end{aligned}
    \right.
\end{align}
where
\begin{align*}
    F_2=&-\delta\vu\cdot\nabla\h_1-\delta\h\dive\vu_2
    -\h_1\dive\delta\vu,\\
    G_2=&-\delta\vu\cdot\nabla\vu_1
    +2\mu\frac{\nabla\delta\h\dive\vu_2}{\hb+\h_2}
    +2\mu\frac{\nabla\h_1\dive\delta\vu}{\hb+\h_2}\\
    &+2\mu\left(\frac{1}{\hb+\h_2}-
    \frac{1}{\hb+\h_1}\right)\nabla\h_1\dive\vu_1\\
    &+2\mu\frac{\nabla\delta\h D(\vu_2)}{\hb+\h_2}
    +2\mu\frac{\nabla\h_1 D(\delta\vu)}{\hb+\h_2}\\
    &+2\mu\left(\frac{1}{\hb+\h_2}-
    \frac{1}{\hb+\h_1}\right)\nabla\h_1 D(\vu_1).
\end{align*}

Similar to \eqref{swm}, we can get
\begin{align*}
    \norm{(\delta\h,\delta\vu)}_{E_T^1}
    \ls
    Ce^{C\norm{\vu_2}_{L_T^1(\be[2])}}
    \Big(\norm{F_2}_{L_T^1(\hbe[0]{1})}+\norm{G_2}_{L_T^1(\be[0])}\Big).
\end{align*}
Noticing that $\h_1\in L_T^\infty(\hbe[0]{1})\cap L_T^1(\hbe[4]{3})$ and
$\vu_2\in L_T^1(\be[2])$, we can get
\begin{align*}
    \norm{F_2}_{L_T^1(\be[0])}
    \lesssim &  T^{\frac{1}{2}}\norm{\delta\vu}_{L_T^\infty(\be[0])}
    \norm{\h_1}_{L_T^2(\be[2])}
    +\norm{\delta\h}_{L_T^\infty(\be[0])}\norm{\vu_2}_{L_T^1(\be[2])}\\
    &+\norm{\h_1}_{L_T^\infty(\be[0])}\norm{\delta\vu}_{L_T^1(\be[2])}.
\end{align*}
Moreover, from $\h_1\in L_T^2(\be[2])\cap L_T^\infty(\be[1])$ by
Lemma~\ref{lem.inter}, we have
\begin{align*}
    \norm{F_2}_{L_T^1(\be[1])}
    \lesssim&
    \norm{\delta\vu}_{L_T^2(\be[1])}
    \norm{\h_1}_{L_T^2(\be[2])}\\
    &+\norm{\delta\h}_{L_T^\infty(\be[1])}
    \norm{\vu_2}_{L_T^1(\be[2])}
    +\norm{\h_1}_{L_T^\infty(\be[1])}
    \norm{\delta\vu}_{L_T^1(\be[2])}.
\end{align*}
Noting that $\h_1,\,\h_2\in L_T^\infty(\be[1])$, $\vu_1,\,\vu_2\in
L_T^1(\be[2])$, and
\begin{align*}
    \norm{\h_1}_{L^\infty([0,T]\times\Real^2)}\ls\frac{1}{2}\hb,\quad
\norm{\h_2}_{L^\infty([0,T]\times\Real^2)}\ls\frac{1}{2}\hb,
\end{align*}
by the construction of solutions, we have
\begin{align*}
    &\norm{G_2}_{L_T^1(\be[0])}\\
    \lesssim &\norm{\delta\vu}_{L_T^\infty(\be[0])}
    \norm{\vu_1}_{L_T^1(\be[2])}
    +4\mu(1+\norm{\h_2}_{L_T^\infty(\be[1])})
    \norm{\delta\h}_{L_T^\infty(\be[1])}
    \norm{\vu_2}_{L_T^1(\be[2])}\\
    &+4\mu(1+\norm{\h_2}_{L_T^\infty(\be[1])})
    \norm{\h_1}_{L_T^\infty(\be[1])}
    \norm{\delta\vu}_{L_T^1(\be[2])}\\
    &+4\mu
    \norm{\delta\h}_{L_T^\infty(\be[1])}
    \norm{\h_1}_{L_T^\infty(\be[1])}
    \norm{\vu_1}_{L_T^1(\be[2])}\\
    &\quad\times(1+\norm{\h_1}_{L_T^\infty(\be[1])}
    +\norm{\h_2}_{L_T^\infty(\be[1])}
    +\norm{\h_1}_{L_T^\infty(\be[1])}
    \norm{\h_2}_{L_T^\infty(\be[1])}).
\end{align*}
Thus, we obtain
\begin{align*}
    \norm{(\delta\h,\delta\vu)}_{E_T^1}
    \ls
    Ce^{C\norm{\vu_2}_{L_T^1(\be[2])}}\Big\{&
    \left(1+T^{\frac{1}{2}}+4\mu\hb^{-1}(1+\norm{\h_2}_{L_T^\infty(\hbe[0]{1})})\right)
    \\
    &\cdot\norm{\h_1}_{L_T^\infty(\hbe[0]{1})}
    +Z(T)
    \Big\}\norm{(\delta\h,\delta\vu)}_{E_T^1},
\end{align*}
where $\lim\sup_{T\to 0^+} Z(T)=0$.

Supposing that $\frac{A^2+A+2}{A(A+2)}(A+1)\te[C]E(0)<\frac{1}{4}$ besides \eqref{uniest7} for $E(0)$ and taking $0<T\ls 1$
small enough such that $C\norm{\vu_2}_{L_T^1(\be[2])}\ls \ln 2$ and
$Z(T)<\frac{1}{2}$, we obtain
$\norm{(\delta\h,\delta\vu)}_{E_T^1}\equiv 0$. Hence,
$(\h_1,\vu_1)\equiv (\h_2,\vu_2)$ on $[0,T]$.

Let $T_m$ (supposedly finite) be the largest time such that the two
solutions coincide on $[0,T_m]$. If we denote
\begin{align*}
    (\tilde{\h}_i(t),
    \tilde{\vu}_i(t)):=(\h_i(t+T_m),\vu_i(t+T_m)), \quad i=1,2,
\end{align*}
we can use the above arguments and the fact that
\begin{align*}
    \norm{\tilde{\h}_i}_{L^\infty(\Real^+\times\Real^2)}\ls
\frac{1}{2}\hb \quad \text{ and}\quad \norm{\tilde{\h}_i}_{L^\infty(\Real^+;
\be[0]\cap\be[1])} \ls A\te[C]E(0)
\end{align*}
to prove that
$(\tilde{\h}_1,\tilde{\vu}_1)=(\tilde{\h}_2,\tilde{\vu}_2)$ on the
interval $[0,T_m]$ with the same $T_m$ as in the above.
Therefore, we complete the proofs.

 \section*{Acknowledgments}
C.C. Hao was partially supported by the National Natural Science Foundation of China (NSFC) (grants
No. 10601061 and 10871134), the Scientific Research Startup Special Foundation for the Winner of the
Award for Excellent Doctoral Dissertation and the Prize of President Scholarship of Chinese Academy
of Sciences (CAS) and the Fields Frontier Project for Talented Youth of CAS. L. Hsiao was partially supported
by the NSFC (grant No. 10871134). H.-L. Li was partially supported
by the NSFC (grant No. 10871134),
the Beijing Nova program, the NCET support of the Ministry of Education of China, and the Huo Ying
Dong Foundation 111033.


\end{document}